\documentclass[12pt,leqno]{article}

\usepackage{amsmath,amsthm,amssymb,mathrsfs,stmaryrd,color,hyperref}
\usepackage[all]{xy}
\usepackage{url}
\usepackage{slashed}
\usepackage{geometry}
\usepackage{tikz}
\usepackage{tikz-cd}
\usepackage[all]{xy}
\usepackage{relsize}
\usepackage[bbgreekl]{mathbbol}

\usepackage{color}

\DeclareSymbolFontAlphabet{\mathbb}{AMSb}
\DeclareSymbolFontAlphabet{\mathbbl}{bbold}

\renewenvironment{proof}{{\noindent\it\textbf{Proof}}\quad\it}{\hfill$\square$}

\newtheorem{thm}[equation]{Theorem}
\newtheorem{cor}[equation]{Corollary}
\newtheorem{lem}[equation]{Lemma}
\newtheorem{prop}[equation]{Proposition}

\newtheorem{ex}[equation]{Example}
\newtheorem{convention}[equation]{Convention}
\newtheorem{construction}[equation]{Construction}

\theoremstyle{definition}
\newtheorem{defn}[equation]{Definition}
\newtheorem{rem}[equation]{{Remark}}

\numberwithin{equation}{section}

\newcommand{\Prism}{\mathbbl{\Delta}}
\newcommand{\Sets}{\mathrm{Sets}}

\newcommand{\Ab}{\mathrm{Ab}}

\newcommand{\Z}{\mathbb{Z}}
\newcommand{\C}{\mathbb{C}}

\newcommand{\Q}{\mathbb{Q}}

\newcommand{\N}{\mathbb{N}}
\newcommand{\Coeq}{\mathrm{Coeq}}
\newcommand{\F}{\mathbb{F}}

\newcommand{\Hom}{\mathrm{Hom}}

\newcommand{\LCoMod}{\mathrm{LCoMod}}
\newcommand{\RCoMod}{\mathrm{RCoMod}}

\newcommand{\TC}{\mathrm{TC}}
\newcommand{\THH}{\mathrm{THH}}
\newcommand{\TP}{\mathrm{TP}}
\newcommand{\Sphere}{\mathbb{S}}

\newcommand{\PfAb}{\mathrm{PfAb}}

\newcommand{\QCoh}{\mathrm{QCoh}}
\newcommand{\Shv}{\mathrm{Shv}}
\newcommand{\Mod}{\mathrm{Mod}}
\newcommand{\op}{\mathrm{op}}

\newcommand{\CAlg}{\mathrm{CAlg}}
\newcommand{\id}{\mathrm{id}}

\newcommand{\Ext}{\mathrm{Ext}}
\newcommand{\Gal}{\mathrm{Gal}}

\newcommand{\Aut}{\mathrm{Aut}}

\newcommand{\RHom}{\mathrm{RHom}}

\newcommand{\Ker}{\mathrm{Ker}}
\newcommand{\Spec}{\mathrm{Spec}}

\title{On Dual Algebras of Hopf Algebroids}
\author{Jingbang Guo \thanks{The author would thank the support from Guozhen Wang and the funding NSFC-12226002.}}

\begin{document}

\sffamily
\maketitle
\thispagestyle{empty}

\abstract{We study the dual algebras of (discrete) Hopf algebroids. In particular, we understand comodules over a Hopf algebroid as (discrete) modules over its dual algebra.}
\tableofcontents

\large

\begin{sloppy}
\clubpenalty=-100
\widowpenalty=-100

\section{Introduction}\label{section:introduction}

This elementary article attempts to understand comodules over a Hopf algebroid as modules over the corresponding dual algebra.


\begin{rem}\label{rem:Hopf-algebroids}
    A Hopf algebroid is a pair of commutative rings $(A,\Gamma)$, together with ring homomorphisms $\eta_L,\eta_R:A\rightarrow \Gamma$, $\epsilon:\Gamma\rightarrow A$, $\Delta:\Gamma\rightarrow \Gamma\otimes_{\eta_R,A,\eta_L}\Gamma$, and $c:\Gamma\rightarrow\Gamma$, so that the pair of functors $\Hom_{\CAlg}(A,-),\Hom_{\CAlg}(\Gamma,-):\CAlg\rightarrow \Sets$ together form a functor $\CAlg\rightarrow \textrm{Groupoids}$, such that $\Hom_\CAlg(A,-)$ is the functor of objects, and $\Hom_\CAlg(\Gamma,-)$ is the functor of morphisms, and $\eta_L,\eta_R$ correspond to the source and target, $\epsilon$ corresponds to the identity, $\Delta$ corresponds to the composition of morphisms, and finally, $c$ corresponds to the inversion of morphisms. For more details on Hopf algebroids, see \cite[Appendix A1]{Ravenel04}.
\end{rem}

\begin{rem}\label{rem:Hopf-algebroids-and-stacks}
    To each Hopf algebroid there is the associated stack: heuristically, a pair of rings $(A,\Gamma)$ together with the structure morphisms give rise to a geometric object $\Coeq(\textbf{Spec}(\Gamma)\rightrightarrows\textbf{Spec}(A))$, where the two arrows are induced by $\eta_L$ and $\eta_R$. Conversely, for a suitable stack $\mathfrak{X}$ with a suitable affine cover $\Spec(A)\rightarrow \mathfrak{X}$ (see \cite[Definition 6]{Naumann07}), there would be a pullback square of the form
    $$
    \xymatrix@R=50pt@C=50pt{
    \textbf{Spec}(\Gamma) \ar[r]^{\Spec(\eta_R)} \ar[d]^{\Spec(\eta_L)} & \textbf{Spec}(A) \ar[d]^{e} \\
    \textbf{Spec}(A)      \ar[r]^{e}        & \mathfrak{X},
    }
    $$
    making $(A,\Gamma)$ a Hopf algebroid. In fact, the $2$-category of flat Hopf algebroids and the $2$-category of rigidified algebraic stacks are equivalent to each other, see \cite[Theorem 8]{Naumann07}.
\end{rem}

\begin{rem}\label{rem:comodules-and-quasi-coherent-sheaves}
    Let $(A,\Gamma)$ be a flat Hopf algebroid (in the sense that $\eta_L:A\rightarrow \Gamma$ is flat), and $\Spec(A)\rightarrow \mathfrak{X}$ the associated rigidified algebraic stack. A quasi-coherent sheaf on $\mathfrak{X}$ can then be interpreted as an $A$-module with descent data, that is to say, a quasi-coherent sheaf on $\mathfrak{X}$ can be interpreted as a (right) $\Gamma$-comodule: a $\Gamma$-comodule is an $A$-module $M$ equipped with a comodule structure map $\psi_M:M\rightarrow M\otimes_{A,\eta_L}\Gamma$, which is $A$-linear with respect to the $\eta_R$-structure of $\Gamma$, and satisfies co-associativity and co-unitarity (See \cite[Definition A1.1.2]{Ravenel04}). In fact, there is an equivalence of categories 
    $$
    \QCoh(\mathfrak{X})\simeq \RCoMod_\Gamma,
    $$
    through which quasi-coherent sheaf cohomology corresponds to comodule cohomology. See \cite[$\S$3.4]{Naumann07}.
\end{rem}

\begin{rem}\label{rem:topological-Hochschild-homology}
    This article was originally required by an investigation into computations of topological cyclic homology, for which Hopf algebroids naturally appears. For example, in \cite{LW22}, by considering the descent along $\THH(\mathcal{O}_K/\Sphere_{W(k)})\rightarrow \THH(\mathcal{O}_K/\Sphere_{W(k)}[z])$, where $\mathcal{O}_K$ is the ring of integers of a local field $K$ with residue field $k$, the computation of $\TC(\mathcal{O}_K/\Sphere_{W(k)})$ is reduced to computations concerning the comodule cohomology of Hopf algebroids $(\THH_*(\mathcal{O}_K/\Sphere_{W(k)}[z]),\THH_*(\mathcal{O}_K/\Sphere_{W(k)}[z_0,z_1])$, and $(\TP_0(\mathcal{O}_K/\Sphere_{W(k)}[z]),\TP_0(\mathcal{O}_K/\Sphere_{W(k)}[z_0,z_1])$.   
\end{rem}

\begin{rem}\label{rem:prismatic-cohomology}
    The Hopf algebroids mentioned in Remark \ref{rem:topological-Hochschild-homology} can be described algebraically with the notion of prismatic envelopes (see \cite[Proposition 3.13]{BS22}): the Hopf algebroid $(\TP_0(\mathcal{O}_K/\Sphere_{W(k)}[z]),\TP_0(\mathcal{O}_K)/\Sphere_{W(k)}[z_0,z_1])$ can be identified as 
    $$
    (W(k)[[z]],W(k)[[z_0,z_1]]\{\frac{z_1^p-z_0^p}{\varphi(E_K(z_0))}\}^\wedge_{\mathcal{N}}).
    $$
    In view of Remarks \ref{rem:Hopf-algebroids-and-stacks} and \ref{rem:comodules-and-quasi-coherent-sheaves}, conceptually there should be a stack (ignoring the possible formal geometry) encoding all the information, and different covers of this stack would provide different coordiantes for computation. For systematic treatments on the geometrization of $p$-adic cohomology theories, see for example \cite{BL22}.   
\end{rem}

\begin{rem}\label{rem:q-de-Rham}
    Thus, following \cite{LW22}, $\TC(\Z_p[\zeta_p]/\Sphere_{\Z_p})$ can be computed through the descent along $\THH(\Z_p[\zeta_p]/\Sphere_{\Z_p})\rightarrow \THH(\Z_p[\zeta_p]/\Sphere_{\Z_p}[q])$ (where $\Sphere_{\Z_p}[q]\rightarrow \Z_p[\zeta_p]$ sends $q$ to a primitive $p$-th root of unity; note that when $p=2$, it would be more convenient to consider a $4$-th root of unity); and this article was intended to provide the algebraic nonsense for this computation. In this introduction, we will illustrate the main idea with the Hopf algebroid
    $$
    (\Z_p[[q-1]],\Z_p[[q_0-1,q_1-1]]\{\frac{q_1-q_0}{[p]_{q_0}}\}_\delta),
    $$
    where $p$ is an odd prime and $[p]_{q_0}=\frac{q_0^p-1}{q_0-1}$, and the structure morphisms are 
    \begin{itemize}
        \item $\eta_L$ sends $q$ to $q_0$, $\eta_R$ sends $q$ to $q_1$;

        \item $\Delta:\Z_p[[q_0-1,q_1-1]]\{\frac{q_1-q_0}{[p]_{q_0}}\}_\delta\rightarrow \Z_p[[q_0-1,q_1-1,q_2-1]]\{\frac{q_1-q_0}{[p]_{q_0}},\frac{q_2-q_0}{[p]_{q_0}}\}_\delta$ sends $q_0$ to $q_0$, $q_1$ to $q_2$. 

        \item $\epsilon: \Z_p[[q_0-1,q_1-1]]\{\frac{q_1-q_0}{[p]_{q_0}}\}_\delta\rightarrow \Z_p[[q-1]]$ sends $q_0,q_1$ to $q$. 

        \item $c:\Z_p[[q_0-1,q_1-1]]\{\frac{q_1-q_0}{[p]_{q_0}}\}_\delta\rightarrow \Z_p[[q_0-1,q_1-1]]\{\frac{q_1-q_0}{[p]_{q_0}}\}_\delta$ interchanges $q_0$ and $q_1$. 
    \end{itemize}
\end{rem}

Let $(A,\Gamma)$ be a Hopf algebroid, and regard $\Gamma$ as an $A$-module through $\eta_L:A\rightarrow \Gamma$. The dual algebra of $(A,\Gamma)$ is $\Gamma^\vee:=\Hom_{A,\eta_L}(\Gamma,A)$.

\begin{rem}\label{rem:the-algebra-structure-of-the-dual-algebra}
    The algebra structure of $\Gamma^\vee$ is determined by the co-multiplication $\Delta:\Gamma\rightarrow \Gamma\otimes_{\eta_R,A,\eta_L}\Gamma$: given $D_1,D_2\in \Gamma^\vee$, that is, given $A,\eta_L$-linear map $D_1,D_2:\Gamma\rightarrow A$, $D_2\circ D_1$ is then the following composite
    $$
    D_2\circ D_1:\Gamma \xrightarrow{\Delta}\Gamma\otimes_{\eta_R,A,\eta_L}\Gamma\xrightarrow{\id_\Gamma\otimes D_1} \Gamma\xrightarrow{D_2}A. 
    $$
    Also, the multiplicative unit $1\in \Gamma^\vee$ is determined as the co-unit $\epsilon:\Gamma\rightarrow A$, which induces a ring homomorphism $\epsilon^\vee:A\rightarrow \Gamma^\vee$. Note that in general the algebra $\Gamma^\vee$ is not commutative.
\end{rem}

\begin{ex}\label{ex:the-prismatic-dual}
    Consider the Hopf algebroid $(A,\Gamma)=(\Z_p[[q-1]],\Z_p[[q_0-1,q_1-1]]\{\frac{q_1-q_0}{[p]_{q_0}}\}_\delta)$, together with the following ring homomorphism 
    $$
    m_q:\Z_p[[q_0-1,q_1-1]]\{\frac{q_1-q_0}{[p]_{q_0}}\}_\delta \rightarrow \Z_p[[q-1]],\quad\quad q_0\mapsto q,q_1\mapsto q^{1+p},
    $$
    which is well-defined by the universal property of prismatic envelopes (see \cite[Proposition 3.13]{BS22}). In particular, $m_q\in \Gamma^\vee$. Moreover, it is not difficult to check that, for any $f\in \Z_p[[q_0-1,q_1-1]]\{\frac{q_1-q_0}{[p]_{q_0}}\}$, $m_q(f)-f\in \Z_p[[q-1]]$ is divisible by $q-1$, and there is an $A,\eta_L$-linear map $\nabla_q:=\frac{1}{q(q-1)}\circ(m_q-1):\Gamma\rightarrow A$, or an element $\nabla_q\in \Gamma^\vee$. It turns out that, up to topology, $\Gamma^\vee$ is a $\Z_p[[q-1]]$-algebra (through $\epsilon ^\vee:A\rightarrow \Gamma^\vee$) generated by $\nabla_q$, with the relation that $\nabla_q\circ q= q^{1+p}\circ \nabla_q +[p]_q$, or 
    $$
    \nabla_q\circ f(q)=f(q^{1+p})\circ\nabla_q+\frac{f(q^{1+p})-f(q)}{q(q-1)},\quad\quad f(q)\in \Z_p[[q-1]]. 
    $$
\end{ex}

\begin{rem}\label{rem:the-prismatic-dual}
    Through $\eta_L:\Z_p[[q-1]]\xrightarrow{q\mapsto q_0}\Z_p[[q_0-1,q_1-1]]\{\frac{q_1-q_0}{[p]_{q_0}}\}_\delta$, $\Z_p[[q_0-1,q_1-1]]\{\frac{q_1-q_0}{[p]_{q_0}}\}_\delta$ is a topologically free $\Z_p[[q-1]]$-module with a basis $\{\prod_{i=0}^\infty (\delta^i(\frac{q_1-q_0}{[p]_{q_0}}))^{n_i}\}$, where $0\leq n_i<p$ and all but finitely many $n_i$'s are zero. Let $\partial_q\in \Gamma^\vee$ be the dual basis of $\frac{q_1-q_0}{[p]_{q_0}}\in \Gamma$, then $\Gamma^\vee$ can also be generated by $\Z_p[[q-1]]$ and $\partial_q$; however, it is far from immediate to determine the multiplicative relations between a general $f(q)$ and $\partial_q$. 
\end{rem}

\begin{rem}\label{rem:dual-algebra-as-an-endomorphism-algebra}
    According to the canonical isomorphism (see \cite[Definition A1.2.1]{Ravenel04}) 
    $$
    \Gamma^\vee=\Hom_{A,\eta_L}(\Gamma,A)\simeq \Hom_{\LCoMod_\Gamma}(\Gamma,\Gamma),
    $$
    the algebra structure of $\Gamma^\vee$ corresponds to the endomorphism algebra structure of $\Hom_{\LCoMod_\Gamma}(\Gamma,\Gamma)$. With notions and notations in Remarks \ref{rem:Hopf-algebroids-and-stacks} and \ref{rem:comodules-and-quasi-coherent-sheaves}, let $\mathcal{O}_{\mathfrak{X}}$ denote the structure sheaf of the stack $\mathfrak{X}$ associated to $(A,\Gamma)$, then
    $$
    \Gamma^\vee\simeq \Hom_{\LCoMod_\Gamma}(\Gamma,\Gamma)\simeq \Hom_{\QCoh(\mathfrak{X})}(e_*e^*\mathcal{O}_{\mathfrak{X}},e_*e^*\mathcal{O}_{\mathfrak{X}}).
    $$
\end{rem}

\begin{ex}\label{ex:endomorphism-algebra}
    Let $A$ be a commutative ring, and consider the Hopf algebroid $(A,A\otimes_\Z A)$ with the canonical structure morphisms. By definition, it dual algebra is 
    $$
    \Hom_{A,\eta_L}(A\otimes_\Z A,A)\simeq \Hom_\Z(A,A),
    $$
    and it is routine to check that, the algebra structure is exactly that of the endomorphism algebra.
\end{ex}

\begin{rem}\label{rem:monadicity-and-comonadicity}
    While the Hopf algebroid $(A,\Gamma)$ actually expresses the co-monadicity of the affine cover $e:\Spec(A)\rightarrow \mathfrak{X}$, the dual algebra $\Gamma^\vee$ should express the monadicity. Heuristically, suppose that the functors $e_*,e^*,e_!,e^!$ are all provided, then a quasi-coherent sheaf on $\mathfrak{X}$ can be understood as a $e^*e_*$-comodule in $\Mod_A$, or a $e^!e_!$-module in $\Mod_A$. 
\end{rem}

\begin{ex}\label{ex:Morita}
    Let $n$ be an integer, and consider the product algebra $\Z^n$, together with the very trivial morphism $e:\Spec(\Z^n)\rightarrow \Spec(\Z)$. The identification $\Shv(\{1,2,\ldots,n\};\Mod_\Z)\simeq \Mod_{\Z^n}$ suggests that, we should have functors $e^*=\Z^n\otimes_\Z-:\Mod_\Z\rightarrow \Mod_{\Z^n}$, $e_*=e_!:\Mod_{\Z^n}\rightarrow \Mod_\Z$ which is the functor of underlying abelian groups, and $e^!=\Hom_{\Z}(\Z^n,-):\Mod_\Z\rightarrow \Mod_{\Z^n}$. In this case, we have $e^!(e_!(\Z^n))=\Hom_\Z(\Z^n,\Z^n)=M_n(\Z)$, which is also the dual algebra of the Hopf algebroid $(\Z^n,\Z^n\otimes \Z^n)$; the Morita equivalence says that $\Mod_\Z\simeq \Mod_{M_n(\Z)}$. 
\end{ex}

Let $M$ be a (right) $\Gamma$-comodule, that is, $M$ is an $A$-module equipped with a comodule structure map $\psi_M:M\rightarrow M\otimes_{A,\eta_L}\Gamma$. Then the dual algebra $\Gamma^\vee$ acts on $M$: for any $D:\Gamma\rightarrow A$, $D$ acts on $M$ through the following composite 
$$
M\xrightarrow{\psi_M}M\otimes_{A,\eta_L}\Gamma \xrightarrow{\id_M\otimes D} M.
$$

\begin{rem}\label{rem:adjunctions}
    The canonical map $\Gamma\rightarrow (\Gamma^\vee)^\vee$ induces the following composite
    $$
    M\xrightarrow{\psi_M}M\otimes_{A,\eta_L}\Gamma\rightarrow M\otimes_A (\Gamma^\vee)^\vee,
    $$
    and by adjunction it further induces a morphism $\Gamma^\vee\rightarrow \Hom_\Z(M,M)$, which is in fact a ring homomorphism.
\end{rem}

\begin{rem}\label{rem:A-as-comodule}
    The module $A$ is naturally equipped with a right comodule structure through $\eta_R:A\rightarrow \Gamma\simeq A\otimes_{A,\eta_L}\Gamma$, and therefore there is a canonial ring homomorphism $\Gamma^\vee\rightarrow \Hom_\Z(A,A)$. 
\end{rem}

\begin{ex}\label{ex:the-prismatic-dual-as-subalgebra}
    Resume Example \ref{ex:the-prismatic-dual}. There is a canonical ring homomorphism $\Gamma^\vee\rightarrow \Hom_\Z(\Z_p[[q-1]],\Z_p[[q-1]])$, and in fact it is a monomorphism. The element $m_q\in \Gamma^\vee$, under this ring homomorphism, is sent to the ring automorphism $\Z_p[[q-1]]\xrightarrow{q\mapsto q^{1+p}}\Z_p[[q-1]]$. Also note that, if $(\Z_p[\zeta_p])_{\Prism}$ denotes the absolute prismatic site of $\Z_p[\zeta_p]$ (see \cite[Remark 4.7]{BS22}), then 
    $$
    \Aut_{(\Z_p[\zeta_p])_{\Prism}}(\Z_p[[q-1]],([p]_q))\simeq \Gal(\Q_p[\zeta_{p^\infty}]/\Q_p[\zeta_p])\simeq (1+p\Z_p)^\times.
    $$
    There is another important element $\varphi\in \Hom_\Z(\Z_p[[q-1]],\Z_p[[q-1]])$, the Frobenius of $\Z_p[[q-1]]$ sending $q$ to $q^p$; however, $\varphi$ does not lie in the image of $\Gamma^\vee\rightarrow \Hom_\Z(\Z_p[[q-1]],\Z_p[[q-1]])$.
\end{ex}

\begin{rem}\label{rem:comodules-and-modules-and-topology}
    It is expected that, by regarding a $\Gamma$-comodule as a $\Gamma^\vee$-module, no essential information would be lost. However, in order to recover comodule structures from module structures, the dual algebra $\Gamma^\vee$, even though $A$ and $\Gamma$ are discrete rings, must be treated as a topological algebra: $\Gamma^\vee=\Hom_{A,\eta_L}(\Gamma,A)$ should be equipped with the compact-open topology. A first observation is that, if $\Gamma$ is a free $A$-module through $\eta_L:A\rightarrow \Gamma$, then the canonical map $\Gamma\rightarrow \Hom_A(\Gamma^\vee,A)$ is an isomorphism, provided that $\Gamma^\vee$ is equipped with the compact-open topology and $\Hom_A(\Gamma^\vee,A)$, which is also equipped with the compact-open topology, is the group of continuous homomorphisms.
\end{rem}

\begin{thm}[Theorem \ref{thm:covariant-translation-for-profinitely-generated}]\label{thm:translation}
    Assume that, through the left unit map $\eta_L:A\rightarrow \Gamma$, $\Gamma$ is a free $A$-module with a countable basis. Then, by regarding comodules as modules, there is an equivalence of categories
    $$
    \RCoMod_\Gamma\xrightarrow{\sim} \Mod^d_{\Gamma^\vee},
    $$
    where $\RCoMod_\Gamma$ is the category of right $\Gamma$-comodules and $\Mod^d_{\Gamma^\vee}$ is the category of discrete $\Gamma^\vee$-modules (with continuous $\Gamma^\vee$-actions).
\end{thm}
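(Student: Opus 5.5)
We fix a countable $A$-basis $\{e_i\}_{i\in I}$ of $\Gamma$ (via $\eta_L$). Then $\Gamma^\vee=\Hom_{A,\eta_L}(\Gamma,A)\simeq\prod_{i\in I}A$ as left $A$-modules, with dual elements $e_i^\vee$. The compact-open topology, with $\Gamma$ discrete, is the product topology. A basis of open left ideals-like neighbourhoods of $0$ is given by $U_F=\{D: D(e_i)=0,\ i\in F\}$ for finite $F\subset I$. Equivalently, these are the annihilators of finitely generated $A$-submodules $\Gamma_F=\bigoplus_{i\in F}Ae_i$. A discrete $\Gamma^\vee$-module is a $\Gamma^\vee$-module $M$ such that the action map $\Gamma^\vee\times M\to M$ is continuous for the discrete topology on $M$. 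Concretely, each $m\in M$ is annihilated by some $U_F$ (and the $A$-actions via $\epsilon^\vee$ agree).

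\textbf{The functor $\Phi\colon\RCoMod_\Gamma\to\Mod^d_{\Gamma^\vee}$.} For a comodule $M$, let $D$ act by $(\id_M\otimes D)\circ\psi_M$, as in the text preceding Remark \ref{rem:adjunctions}. That this is a ring action follows from coassociativity and the formula of Remark \ref{rem:the-algebra-structure-of-the-dual-algebra}; the unit acts trivially by counitarity. I will check the order of composition carefully, and if needed use the opposite convention. Discreteness holds because $\psi_M(m)=\sum_{i\in F}m_i\otimes e_i$ is a finite sum, so $U_F$ kills $m$. Morphisms of comodules are clearly $\Gamma^\vee$-linear, so $\Phi$ is a faithful functor.

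\textbf{The inverse $\Psi$.} Given a discrete module $M$ and $m\in M$, define
\[
\psi_M(m)=\sum_{i\in I} e_i^\vee\cdot m\otimes e_i .
\]
This sum is finite: $m$ is killed by some $U_F$, and $e_i^\vee\in U_F$ for $i\notin F$. The value is independent of the basis. Indeed, it is characterized as the unique element $x\in M\otimes_A\Gamma$ with $(\id\otimes D)(x)=D\cdot m$ for all $D\in\Gamma^\vee$. Uniqueness holds because $M\otimes_A\Gamma\simeq\bigoplus_i M$ and the $e_i^\vee$ separate coordinates; existence uses continuity, writing $D=\sum_{i\in F} D(e_i)e_i^\vee+D'$ with $D'\in U_F$, where I must confirm the $A$-linearity conventions so that $D(e_i)$ acts through $\epsilon^\vee$ correctly. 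The same characterization gives the remaining properties.
\begin{itemize}
\item $A$-linearity of $\psi_M$ for the $\eta_R$-structure comes from the commutation of $\epsilon^\vee(a)$ with $D$, namely $D\circ a=(D\cdot\eta_R(a))$.
\item Counitarity follows from $\epsilon=1$.
\item Coassociativity follows by applying $\id\otimes D_1\otimes D_2$. The key point is that $M\otimes\Gamma\otimes\Gamma\simeq\bigoplus_{i,j}M$ is detected by the functionals $e_i^\vee\otimes e_j^\vee$ (freeness again), and these compose to give the product $D_2\circ D_1$.
\end{itemize}
The uniqueness characterization then shows that $\Psi\Phi=\id$ and $\Phi\Psi=\id$ on objects, and that $\Gamma^\vee$-linear maps between discrete modules commute with $\psi$. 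Hence $\Phi$ is full, and the two functors are mutually inverse.

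\textbf{Main obstacle.} I expect the difficulty to lie in the bookkeeping of the two $A$-module structures. $\Gamma^\vee$ carries left and right $A$-actions via $\eta_L$ and $\eta_R$, so $\epsilon^\vee(a)D\neq D\epsilon^\vee(a)$. The discreteness condition and the expansion $D=\sum D(e_i)e_i^\vee+D'$ must be formulated with the correct side. I also need coassociativity to be detectable via duals, which requires that $\Gamma\otimes_{\eta_R,A,\eta_L}\Gamma$ be free over $A$ with basis $e_i\otimes e_j$ under the appropriate structure. I will prove this first, as a lemma, from freeness of $\Gamma$ via $\eta_L$. Countability of the basis is used only to guarantee that the product topology is first countable and well-behaved, so that open neighbourhoods are exactly the $U_F$. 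I will check whether it is needed at all beyond that.
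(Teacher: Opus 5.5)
Your proposal is correct and follows essentially the paper's route. Your characterization of $\psi_M(m)$ as the unique $x\in M\otimes_{A,\eta_L}\Gamma$ with $(\id_M\otimes D)(x)=D\cdot m$ for all $D$ is an elementwise form of the paper's isomorphism $M\otimes_{A,\eta_L}\Gamma\simeq\Hom_{A}(\Gamma^\vee,M)$ (Corollary \ref{cor:tensor-Hom-profinitely-generated}) combined with the adjunction of Lemma \ref{lem:tensor-Hom-adjunction-profinite}, and your detection of $M\otimes_{A,\eta_L}\Gamma\otimes_{\eta_R,A,\eta_L}\Gamma$ by pairs $(D_1,D_2)$ is exactly the argument of Lemma \ref{lem:covariant-translation-for-profinitely-generated}; the convention checks you flag all work out, since left multiplication by $\epsilon^\vee(a)$ is pointwise scaling (so $D-\sum_{i\in F}D(e_i)e_i^\vee\in U_F$), the induced action is a genuine left action, and countability of the basis is never actually needed.
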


\begin{ex}\label{ex:prismatic-crystals}
    Resume Example \ref{ex:the-prismatic-dual}. Since the adic completion or the formal geometry would bring extra technicalities, in this case, instead of arbitrary comodules, we only consider ``prismatic vector bundles'' (in the sense of \cite[$\S$ 7]{GLQ25}). Then the proof of the aforementioned theorem, perhaps with slight modifications concerning adic completion, implies that, a prismatic vector bundle is a $\Gamma^\vee$-module whose underlying $\Z_p[[q-1]]$-module is finitely generated and projective; in view of Example \ref{ex:the-prismatic-dual}, a prismatic vector bundle is thus a finitely generated projective $\Z_p[[q-1]]$-module $M$ equipped with an operator $\nabla_q:M\rightarrow M$, which satisfies the relation that $\nabla_q\circ q=q^{1+p}\circ \nabla_q+[p]_q$ and is in some sense nilpotent (see \cite[Definition 6.14, Theorem 7.12]{GLQ25}).  
\end{ex}

\begin{rem}\label{rem:symmetric-monoidal-structure}
    The category $\RCoMod_\Gamma$ is symmetric monoidal, and under the equivalence of categories $\RCoMod_\Gamma\simeq \Mod^d_{\Gamma^\vee}$, this symmetric monoidal structure should be reflected to certain co-multiplcation of the algebra $\Gamma^\vee$. But such a co-multiplication would require a notion of tensor product of general topological abelian groups, for example, the solid tensor of solid abelian groups.
\end{rem}

\begin{rem}\label{rem:functoriality}
    Let $f=(f_o,f_m):(A,\Gamma)\rightarrow (B,\Sigma)$ be a morphism of Hopf algebroids. There is the functor $f^*:\RCoMod_\Gamma\rightarrow \RCoMod_\Sigma$, which, in view of Remark \ref{rem:comodules-and-quasi-coherent-sheaves}, corresponds to the pullback of quasi-coherent sheaves on the associated stacks. However, this functoriality is not immediate from the perspective of dual algebras: there is no immediate functoriality for the passage from Hopf algebroids to their dual algebras. Yet the morphism $f$ factors as 
    $$
    (A,\Gamma)\rightarrow (B,B\otimes_{A,\eta_L}\Gamma\otimes_{\eta_R,A}B)\rightarrow (B,\Gamma),
    $$
    and sometimes (see \cite[Theorem 13]{Naumann07}) the morphism $(A,\Gamma)\rightarrow (B,B\otimes_{f_o,A,\eta_L}\Gamma\otimes_{\eta_R,A,f_o}B)$ would induce an equivalence of the categories of comodules, and for $(B,B\otimes_{A,\eta_L}\Gamma\otimes_{\eta_R,A}B)\rightarrow (B,\Gamma)$ there can be an induced morphism of dual algebras.
\end{rem}

\begin{ex}\label{ex:Frobenius}
    Resume Example \ref{ex:the-prismatic-dual}. The Hopf algebroid $(A,\Gamma)=(\Z_p[[q-1]],\Z_p[[q_0-1,q_1-1]]\{\frac{q_1-q_0}{[p]_{q_0}}\}_\delta)$ admits a Frobenius endomorphism 
    $$
    \varphi:(A,\Gamma)\rightarrow (A,\Gamma), 
    $$
    where $\varphi(q)=q^p$, $\varphi(q_0)=q_o^p$, and $\varphi(q_1)=q_1^p$. As in Remark \ref{rem:functoriality}, write $\Gamma_\varphi:=A\otimes_{\varphi,A,\eta_L}\Gamma\otimes_{\eta_R,A,\varphi}A$, then $\varphi$ factors as 
    $$
    (A,\Gamma)\xrightarrow{(\varphi,\varphi\otimes\id_\Gamma\otimes\varphi)} (A,\Gamma_\varphi)\xrightarrow{(\id_A,\widetilde{\varphi})} (A,\Gamma),
    $$
    and up to $p$-adic completion, $(\varphi,\varphi\otimes\id\otimes\varphi):(A,\Gamma)\rightarrow (A,\Gamma_\varphi)$ would induce an equivalence of the categories of quasi-coherent sheaves. Note that 
    $$
    \Gamma_\varphi=\Z_p[[q_0-1,q_1-1]]\{\frac{q_1^p-q_0^p}{[p]_{q_0^p}}\}_\delta, 
    $$
    and under this identification, $\widetilde{\varphi}$ is actually the inclusion. Similarly to Example \ref{ex:the-prismatic-dual}, $\Gamma^\vee$ is generated by an operator $\nabla_{q}^\varphi:=\frac{m_q-\id}{q(q^p-1)}$, and the morphism $\Gamma^\vee\rightarrow \Gamma_\varphi^\vee$, obviously, sends $\nabla_q\in \Gamma^\vee$ to $[p]_q\nabla_q^\varphi\in \Gamma_\varphi^\vee$.
\end{ex}

It is expected that, under the equivalence of categories $\RCoMod_\Gamma\simeq \Mod^d_{\Gamma^\vee}$, we could transport the cohomological algebra in $\RCoMod_\Gamma$ to a category of $\Gamma^\vee$-modules where we can use projective resolutions. However, since a non-trivial topology is involved, there is no immediate actualization of the expected category of $\Gamma^\vee$-modules. 

\begin{ex}\label{ex:prismatic-cohomology-through-dual}
    According to Example \ref{ex:the-prismatic-dual}, there should be a resolution of $A$ as a left $\Gamma^\vee$-modules as follows:
    $$
    0\rightarrow \Gamma^\vee\xrightarrow{-\cdot \nabla_q}\Gamma^\vee\xrightarrow{\eta_L^\vee}A\rightarrow 0,
    $$
    suggesting that, $\Ext_\Gamma(A,A)\simeq \Ext_{\Gamma^\vee}(A,A)$ should be computed by the two-term complex
    $$
    A\xrightarrow{\nabla_q}A.
    $$
    This complex is indeed correct, see for example \cite[$\S$7]{GLQ25}, or \cite[$\S$4.8]{BL22}. However, the aforementioned argument could be valid only within an appropriate category of $\Gamma^\vee$-modules larger then the category of discrete $\Gamma^\vee$-modules.
    
\end{ex}

\begin{rem}\label{rem:profinite-rings}
    Suppose that, in the Hopf algebroid $(A,\Gamma)$, $A$ is a finite ring, then $\Gamma^\vee$ is a profinite ring, for which the category of profinite $\Gamma^\vee$-modules is available, and it is indeed the case that, $\Ext_{\Gamma^\vee}(M,N)$ can be computed by either projective resolutions of $M$ or injective resolutions of $N$, provided that $M$ is a profinite $\Gamma^\vee$-module and $N$ is a discrete $\Gamma^\vee$-module. See for example \cite[$\S$5]{RZ10}.  
\end{rem}

\begin{rem}\label{rem:solid-modules}
    For a profinite ring $\Lambda$, the category of profinite $\Lambda$-modules and the category of discrete $\Lambda$-modules can both be embedded into the category of solid $\Lambda$-modules, with their cohomological algebras and tensor products preserved, see \cite{Tang24}. This, together with Remark \ref{rem:symmetric-monoidal-structure}, suggests that, the dual algebras of Hopf algebroids should be developed with the theory of solid abelian groups.  
\end{rem}

\begin{rem}\label{rem:duality}
    Since the dual algebra $\Gamma^\vee$, in general, is not commutative, the notion of left $\Gamma^\vee$-modules and that of right $\Gamma^\vee$-modules are different. However, in some cases the left modules and right modules are related. For example, if $\Lambda$ is a profinite ring, then the Pontryagin duality (see for example \cite[$\S$5.1]{RZ10}) induces an contravariant equivalence between the category of profinite left $\Lambda$-modules and the category of discrete right $\Lambda$-modules. Also, the six functor formalism (see for example \cite[$\S$4]{KNP24}) indicates that, the categories $\Mod_{\Gamma^\vee}$ and $\Mod_{\Gamma^{\vee}_\op}$ should be related with the dualizing object $\RHom_{\Gamma^\vee}(A,\Gamma^\vee)$, which again should make sense provided the theory of solid abelian groups. 
\end{rem}

It would be helpful to end this introduction by listing the basic structure of the algebra $\Gamma^\vee$.
    \begin{itemize}
        \item The multiplicative unit of $\Gamma^\vee=\Hom_A(\Gamma,A)$ is given by the co-unit $\epsilon:\Gamma\rightarrow A$ of the Hopf algebroid $(A,\Gamma)$. 

        \item Also, the co-unit $\epsilon$ induces a morphism of algebras $\epsilon^\vee:A\rightarrow \Gamma^\vee$, and $\Gamma^\vee$ is thus an $A$-algebra. However, in general $\epsilon^\vee(A)$ is not in the center of $\Gamma^\vee$: $\epsilon^\vee(A)$ is in the center of $\Gamma^\vee$ if and only if $\eta_L=\eta_R$.

        \item The $A$-module structure on $\Gamma^\vee$ from $\eta_L$ is the left action of $A$ on $\Gamma^\vee$ through $\epsilon^\vee$, and the $A$-module structure on $\Gamma^\vee$ from $\eta_R$ is the right action of $A$ on $\Gamma^\vee$ through $\epsilon^\vee$.

        \item The left unit $\eta_L:A\rightarrow \Gamma$ induces a morphism of $A$-modules $\eta_L^\vee:\Gamma^\vee\rightarrow A$. This morphism is generally not $A$-linear with respect to the right $A$-module structure on $\Gamma^\vee$.

        \item There is a canonical continuous morphism of $A$-algebras $\Gamma^\vee\rightarrow \Hom_{\Z}(A,A)$. In particular, $A$ is a left $\Gamma^\vee$-module. Moreover, $\eta_L^\vee:\Gamma^\vee\rightarrow A$ is a morphism of left $\Gamma^\vee$-modules. 

        We might call $\Gamma^\vee$ (or the Hopf algebroid $(A,\Gamma)$) restrictive if the canonical morphism $\Gamma^\vee\rightarrow \Hom_\Z(A,A)$ is a monomorphism, and in this case $\Gamma^\vee$ is a suitable algebra between $A=\Hom_A(A,A)$ and $\Hom_{\Z}(A,A)$.
    \end{itemize}

\subsection*{Acknowledgements}
The writer is very grateful to Ruochuan Liu and Guozhen Wang for suggesting the idea of dual algebras of Hopf algebroids. The writer owes a special debt of gratitude to Chris Brav for explaining subtlies concerning topological abelian groups. A special thanks to Yingdi Qin for helpful conversations. Thanks also to Rixing Fang, Hui Gao, Yuwen Gu, Zhengping Gui, Guchuan Li, Weinan Lin, Xiangrui Shen, Yupeng Wang, Jiaheng Zhao, and Shuhan Zheng for valuable discussions and suggestions.

\section{The Dual Algebras of Hopf algebroids}\label{section:the-dual-algebras}

In this section we elaborate the ideas suggested in Section \ref{section:introduction}. 

\begin{convention}\label{convention:the-dual-Hopf-algebroids}
    In this section, let $(A,\Gamma)$ be a Hopf algebroid where $A$ is a commutative ring and $\Gamma$, through the left unit map $\eta_L:A\rightarrow \Gamma$, is a free $A$-module with a countable basis, and we will refer to such a Hopf algebroid as a free Hopf algebroid. Let $\eta_R:A\rightarrow \Gamma$ denote the right unit map, $\Delta:\Gamma\rightarrow \Gamma\otimes_{\eta_R,A,\eta_L}\Gamma$ the co-multiplication, and $\epsilon:\Gamma\rightarrow A$ the co-unit, and $c:\Gamma\rightarrow \Gamma$ the conjugation. 
    
    By a $\Gamma$-comodule it means a right $\Gamma$-comodule, that is, an $A$-module $M$ equipped with a map 
    $$
    \psi_M:M\rightarrow M\otimes_{A,\eta_L}\Gamma,
    $$
    which is $A$-linear with respect to the $\eta_R$-linear structure of $\Gamma$, and is co-associative and co-unitary; let $\RCoMod_\Gamma$ denote the category of right $\Gamma$-comodules.

    Finally, almost all rings and modules considered are equipped with a topology, although sometimes the discrete topology; and $\Hom(-,-)$ is taken to be the group of continuous homomorphisms, and it is further equipped with the compact-open topology unless otherwise specified. For the group of not necessarily continuous homomorphisms, we use the notation $\Hom^{un}(-,-)$. 
\end{convention}

We would like to study the linear dual $\Hom_{A,\eta_L}(\Gamma,A)$, which turns out to be a topological ring and encodes almost all of the essential information associated to the Hopf algebroid $(A,\Gamma)$.

\begin{defn}\label{defn:profinite-dual}
    The dual algebra of the Hopf algebroid $(A,\Gamma)$ is defined to be 
    $$
    \Gamma^\vee:=\Hom_{A,\eta_L}(\Gamma,A),
    $$
    where the algebra structure is determined as follows: for $D_1,D_2\in \Gamma^\vee$, $D_2\circ D_1\in \Gamma^\vee$ is the composite 
    $$
    D_2\circ D_1:\Gamma \xrightarrow{\Delta}\Gamma\otimes_{\eta_R,A\eta_L}\Gamma\xrightarrow{\id_\Gamma\otimes D_1}\Gamma\xrightarrow{D_2} A,
    $$
    and the unit of $\Gamma^\vee$ is the co-unit of the Hopf algebroid $\epsilon:\Gamma\rightarrow A$.
    
    Moreover, $\Gamma^\vee=\Hom_{A,\eta_L}(\Gamma,A)$ is equipped with the compact-open topology where $A$ is regarded as a discrete topological ring, and $\Gamma,A$ are discrete topological $A$-modules.
\end{defn}

\begin{lem}\label{lem:dual-algebra}
    The dual algebra $\Gamma^\vee$, defined in Definition \ref{defn:profinite-dual}, is indeed an algebra.
\end{lem}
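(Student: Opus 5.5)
We have to check three things: the product $D_2\circ D_1$ is well defined and lies in $\Gamma^\vee$; the product is associative; and $\epsilon$ is a two-sided unit. The natural approach is the one suggested in Remark \ref{rem:dual-algebra-as-an-endomorphism-algebra}, namely to turn each $D\in\Gamma^\vee$ into an endomorphism of $\Gamma$, and then check the identities by Hopf algebroid axioms. Continuity of multiplication for the compact-open topology is not asked for at this stage, since "algebra" here only means ring.

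\emph{Well-definedness.} Let $D_1:\Gamma\to A$ be $(A,\eta_L)$-linear. Then $\id_\Gamma\otimes D_1:\Gamma\otimes_{\eta_R,A,\eta_L}\Gamma\to\Gamma\otimes_{\eta_R,A}A\simeq\Gamma$ is well defined, because $D_1$ is linear for the left $A$-structure on the second factor, which is exactly the structure used in the tensor product. I would set
$$\widetilde{D}_1:=(\id_\Gamma\otimes D_1)\circ\Delta:\Gamma\to\Gamma.$$
Because $\Delta$ is $\eta_L$-linear into the first tensor factor, $\widetilde{D}_1$ is $(A,\eta_L)$-linear. Hence $D_2\circ D_1=D_2\circ\widetilde{D}_1$ is $(A,\eta_L)$-linear, and so it lies in $\Gamma^\vee$.

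\emph{Unit.} I use the counit axioms $(\id\otimes\epsilon)\Delta=\id$ and $(\epsilon\otimes\id)\Delta=\id$. The first gives $\widetilde{\epsilon}=\id_\Gamma$, hence $D\circ\epsilon=D$. For $\epsilon\circ D=\epsilon\circ(\id\otimes D)\circ\Delta$, I note that $\epsilon\otimes\id$ and $\id\otimes D$ act on different factors. So
$$\epsilon\circ(\id\otimes D)=D\circ(\epsilon\otimes\id)\quad\text{as maps } \Gamma\otimes_{\eta_R,A,\eta_L}\Gamma\to A.$$
This needs care with the identifications $\Gamma\otimes_A A\simeq\Gamma$ and $A\otimes_A\Gamma\simeq\Gamma$: $\epsilon$ is $A$-linear in the $\eta_R$-structure since $\epsilon\eta_R=\id$, and $D$ is $\eta_L$-linear. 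With that, $\epsilon\circ D=D\circ(\epsilon\otimes\id)\circ\Delta=D$.

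\emph{Associativity.} This is the main, though routine, obstacle. I would prove two facts.

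First, $D\mapsto\widetilde{D}$ is multiplicative, $\widetilde{D_2\circ D_1}=\widetilde{D}_2\circ\widetilde{D}_1$. Unwinding, this says
$$(\id\otimes D_2)\Delta\circ(\id\otimes D_1)\Delta=(\id\otimes D_2(\id\otimes D_1)\Delta)\Delta .$$
I would derive it from coassociativity $(\Delta\otimes\id)\Delta=(\id\otimes\Delta)\Delta$ together with the fact that $\Delta$ commutes with $\id\otimes D_1$ on the appropriate factors, i.e. $\Delta\circ(\id\otimes D_1)=(\id\otimes\id\otimes D_1)\circ(\Delta\otimes\id)$. This in turn uses that $\Delta$ is $\eta_R$-linear on the right, which follows from $\Delta\eta_R=1\otimes\eta_R$ and multiplicativity of $\Delta$.

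Second, once the first fact holds, associativity is immediate:
$$D_3\circ(D_2\circ D_1)=D_3\circ\widetilde{D}_2\circ\widetilde{D}_1=(D_3\circ D_2)\circ D_1,$$
where the last equality is $D_3\circ\widetilde{D}_2=D_3\circ D_2$ followed again by precomposition with $\widetilde{D}_1$.

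\emph{Ring structure.} Additivity in each variable is clear, since all the maps involved are additive in $D_1$ and $D_2$. Together with the previous steps this makes $\Gamma^\vee$ a ring.

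Alternatively, one could phrase the whole argument through the isomorphism $\Gamma^\vee\simeq\Hom_{\LCoMod_\Gamma}(\Gamma,\Gamma)$, $D\mapsto\widetilde{D}$, of Remark \ref{rem:dual-algebra-as-an-endomorphism-algebra}. Under it, associativity and the unit are inherited from composition of endomorphisms. This reduces the work to checking that $\widetilde{D}$ is a comodule map (again coassociativity) and that $\widetilde{(\cdot)}$ intertwines the two products.
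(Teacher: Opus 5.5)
Your proof is correct and is essentially the paper's argument. Associativity comes from coassociativity of $\Delta$, which you route through $D\mapsto\widetilde{D}=(\id\otimes D)\circ\Delta$ instead of the paper's composite $D_{321}$, and the unit comes from counitality plus $\eta_R$-linearity of $\epsilon$. Your explicit commutation $\Delta\circ(\id\otimes D_1)=(\id\otimes\id\otimes D_1)\circ(\Delta\otimes\id)$, together with the well-definedness check, makes precise a step the paper uses only tacitly in its final equality.
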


\begin{proof}
    It is immediate to see that the multiplication described in Definition \ref{defn:profinite-dual} is bi-additive. 
    
    To check that it is associative, it suffices to resort to the co-associativity of the Hopf algebroid, that is, we have the following commutative diagram
    $$
    \xymatrix@R=50pt@C=50pt{
    \Gamma \ar[r]^-\Delta \ar[d]^\Delta & \Gamma\otimes_{\eta_R,A,\eta_L}\Gamma \ar[d]^{\id_\Gamma\otimes \Delta}\\
    \Gamma\otimes_{\eta_R,A,\eta_L}\Gamma \ar[r]^-{\Delta\otimes\id_\Gamma} & \Gamma\otimes_{\eta_R,A,\eta_L}\Gamma\otimes_{\eta_R,A,\eta_L}\Gamma
    },
    $$
    and for any $D_1,D_2,D_3\in \Gamma^\vee$, consider the following composite 
    $$
    D_{321}:\Gamma\otimes_{\eta_R,A,\eta_L}\Gamma\otimes_{\eta_R,A,\eta_L}\Gamma
    \xrightarrow{\id_{\Gamma\otimes_{\eta_R,A,\eta_L}\Gamma\otimes D_1}}\Gamma\otimes_{\eta_R,A,\eta_L}\Gamma\xrightarrow{\id_\Gamma\otimes D_2}\Gamma\xrightarrow{D_3}A,
    $$
    then 
    $$
    D_3\circ (D_2\circ D_1)=D_{321}\circ (\id_\Gamma\otimes\Delta)\circ \Delta=D_{321}\circ (\Delta\otimes \id_\Gamma)\circ \Delta=(D_3\circ D_2)\circ D_1.
    $$

    It remains to check that the multiplication is unitary with unit $\epsilon:\Gamma\rightarrow A$. For any $D\in \Gamma^\vee$, $D\circ \epsilon$ is the following composite
    $$
    \Gamma\xrightarrow{\Delta}\Gamma\otimes_{\eta_R,A,\eta_L}\Gamma\xrightarrow{\id_\Gamma\otimes \epsilon}\Gamma\xrightarrow{D}A,
    $$
    and since the Hopf algebroid is co-unitary, this composite is again $D:\Gamma\rightarrow A$; $\epsilon \circ D$ is the following composite 
    $$
    \Gamma\xrightarrow{\Delta}\Gamma\otimes_{\eta_R,A,\eta_L}\Gamma\xrightarrow{\id_\Gamma\otimes D}\Gamma\xrightarrow{\epsilon}A,
    $$
    and note that $\epsilon:\Gamma\rightarrow A$ is also $\eta_R$-linear, this composite coincides with 
    $$
    \Gamma\xrightarrow{\Delta}\Gamma\otimes_{\eta_R,A,\eta_L}\Gamma\xrightarrow{\epsilon\otimes \id_\Gamma}\Gamma \xrightarrow{D}A,
    $$
    and is again $D:\Gamma\rightarrow A$.
\end{proof}

\begin{lem}\label{lem:profinite-dual-topological-algebra}
    The dual algebra $\Gamma^\vee$, defined in Definition \ref{defn:profinite-dual}, is indeed a topological ring.
\end{lem}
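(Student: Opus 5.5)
We must show that addition and multiplication on $\Gamma^\vee$ are continuous for the compact-open topology. Since $\Gamma$ and $A$ are discrete, compact subsets of $\Gamma$ are finite, so the compact-open topology on $\Gamma^\vee=\Hom_{A,\eta_L}(\Gamma,A)$ is the topology of pointwise convergence. A basis of neighbourhoods of $D\in\Gamma^\vee$ is given by the sets
$$
U(D;x_1,\ldots,x_n)=\{D'\in\Gamma^\vee : D'(x_i)=D(x_i),\ 1\leq i\leq n\},\qquad x_1,\ldots,x_n\in\Gamma.
$$
If we fix a countable basis $\{e_k\}$ of $\Gamma$ over $A$ via $\eta_L$, then $A$-linearity lets us restrict the $x_i$ to basis elements. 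This identifies $\Gamma^\vee\simeq\prod_k A$ with the product topology of discrete factors, which is Hausdorff and makes $\Gamma^\vee$ a topological abelian group. In particular, addition and negation are continuous because they are computed pointwise.

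The key step is the continuity of the multiplication $(D_2,D_1)\mapsto D_2\circ D_1$. Fix $x\in\Gamma$ and write
$$
\Delta(x)=\sum_{j=1}^m y_j\otimes z_j \in \Gamma\otimes_{\eta_R,A,\eta_L}\Gamma,
$$
a finite sum. By Definition \ref{defn:profinite-dual},
$$
(D_2\circ D_1)(x)=D_2\Bigl(\sum_j y_j\,\eta_R(D_1(z_j))\Bigr)
$$
(the meaning of $\id_\Gamma\otimes D_1$, well defined because $D_1$ is $\eta_L$-linear). Suppose $D_1'$ agrees with $D_1$ on $z_1,\ldots,z_m$. Then the element $w=\sum_j y_j\eta_R(D_1(z_j))\in\Gamma$ is unchanged. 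If in addition $D_2'$ agrees with $D_2$ on $w$, then $(D_2'\circ D_1')(x)=(D_2\circ D_1)(x)$.

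So, given finitely many $x_1,\ldots,x_n$, the product neighbourhood $U(D_2;w_1,\ldots,w_n)\times U(D_1;\{z_{ij}\})$ maps into $U(D_2\circ D_1;x_1,\ldots,x_n)$. This gives continuity of multiplication at every point.

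The only subtle point is that the choice of representative $\sum_j y_j\otimes z_j$ is not unique, so $w$ must be shown to depend only on $\Delta(x)$ and $D_1$. This holds since $\id_\Gamma\otimes D_1$ is a well-defined map on the tensor product. Any fixed representative suffices for the neighbourhood argument, because agreement of $D_1'$ with $D_1$ on the chosen $z_j$ forces $(\id\otimes D_1')(\Delta x)=(\id\otimes D_1)(\Delta x)$.

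Combined with Lemma \ref{lem:dual-algebra}, this shows $\Gamma^\vee$ is a topological ring. The countability of the basis is not needed here; I expect it to matter only later, for the comodule/module equivalence.
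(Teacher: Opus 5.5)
Your proof is correct and follows essentially the same route as the paper's: you reduce the compact-open topology to agreement on finitely many elements of $\Gamma$, check addition pointwise, and for multiplication write $\Delta(x)=\sum_j y_j\otimes z_j$, fix $D_1$ on the $z_j$, and fix $D_2$ on $w=\sum_j y_j\,\eta_R(D_1(z_j))$, exactly as the paper does with $\widetilde{\gamma}$. The identification with $\prod_k A$ and your remark that $w$ does not depend on the chosen representative of $\Delta(x)$ are harmless extras that the argument does not need.
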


\begin{proof}
    By definition, since both $\Gamma$ and $A$ is equipped with the discrete topology, 
    the compact-open topology of $\Gamma^\vee$ is generated by subsets of the form $B(K,U)=\{D\in \Gamma^\vee|D(K)\subset U\}$, where $K$ is a finite subset of $\Gamma$, and $U$ is a subset of $A$. Therefore, it suffices to consider finite intersections of sets of the form $B(\gamma,a)=\{D\in \Gamma^\vee|D(\gamma)=\{a\}\}$, where $\gamma\in \Gamma$, $a\in A$.



    First we check that the addition of $\Gamma^\vee=\Hom_{A,\eta_L}(\Gamma,A)$ is continuous. Given $D_1,D_2\in \Gamma^\vee$ with $D_1+D_2\in B(\gamma,a)$ (strictly speaking we shall consider a finite intersection of sets of the form $B(\gamma,a)$, but the argument is essentially the same, up to a possibly finite intersection), where $\gamma\in \Gamma$ and $a\in A$, in particular $a=D_1(\gamma)+D_2(\gamma)$. Then $B(\gamma,D_1(\gamma))$ is a neighborhood of $D_1$, and $B(\gamma,D_2(\gamma))$ is a neighborhood of $D_2$, and $B(\gamma,D_1(\gamma))+B(\gamma,D_2(\Gamma))\subset B(\gamma,a)$. 
    

    Then we check that the multiplication of $\Gamma^\vee$ is continuous. Given $D_1,D_2\in \Gamma^\vee$ with $D_2\circ D_1\in B(\gamma,a)$, where $\gamma\subset \Gamma$ and $a\in A$. Recall that $D_2\circ D_1$ is the following composite 
    $$
    \Gamma\xrightarrow{\Delta}\Gamma\otimes_{\eta_R,A,\eta_L}\Gamma\xrightarrow{\id_\Gamma \otimes D_1}\Gamma \xrightarrow{D_2}A;
    $$
    write $\Delta(\gamma)=\sum_{1\leq i\leq k}\gamma_i'\otimes\gamma_i\in \Gamma\otimes_{\eta_R,A,\eta_L}\Gamma$, then 
    $$
    (\id_\Gamma \otimes D_1)(\Delta(\Gamma))= \sum_{1\leq i\leq k}\gamma_i'\eta_R(D_1(\gamma_i)):=\widetilde{\gamma},
    $$
    and $D_2(\widetilde{\gamma})=a$. Thus $\cap_{1\leq i\leq k}B(\gamma_i,D_1(\gamma_i))$ is a neighborhood of $D_1$, and $B(\widetilde{\gamma},a)$ is a neighborhood of $D_2$, and $B(\widetilde{\gamma},a)\circ (\cap_{1\leq i\leq k}B(\gamma_i,D_1(\gamma_i)))\subset B(\gamma,a)$.
\end{proof}




\begin{lem}\label{lem:dual-of-free-elaborated}
    Let $M$ be a free $A$-module, and choose a basis $\{e_i\}_{i\in I}$ of $M$ as a free $A$-module. Then the compact-open topology of $M^\vee:=\Hom_A(M,A)$ coincides with the product topology of $M^\vee\simeq \prod_{i\in I}Ae_i^\vee$, where each $Ae_i^\vee$ is equipped with the discrete topology.
\end{lem}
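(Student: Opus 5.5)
The plan is to identify $M^\vee=\Hom_A(M,A)$ with $\prod_{i\in I}A$ via $D\mapsto (D(e_i))_{i\in I}$, and then compare bases of neighborhoods directly. First, this map is a bijection of sets: an $A$-linear map on a free module is determined by its values on a basis, and any family of values $(a_i)_{i\in I}$ extends uniquely to an $A$-linear map. Continuity is automatic here because $M$ carries the discrete topology, so every linear map is continuous. Writing $e_i^\vee$ for the dual functional, $D$ corresponds to the formal sum $\sum_i D(e_i)e_i^\vee$. This justifies the notation $M^\vee\simeq\prod_{i\in I}Ae_i^\vee$.

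Next I will describe the compact-open topology concretely, as in the proof of Lemma \ref{lem:profinite-dual-topological-algebra}. Since $M$ and $A$ are discrete, compact subsets of $M$ are finite. Hence the topology is generated by the sets $B(m,a)=\{D\mid D(m)=a\}$ with $m\in M$ and $a\in A$. More precisely, each $B(K,U)$ with $K$ finite is a union of finite intersections of such sets. So finite intersections of the $B(m,a)$ form a basis.

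On the product side, a basis consists of the sets $\{(a_i)\mid a_{i_1}=b_1,\dots,a_{i_n}=b_n\}$. Under the bijection, such a set is exactly $\bigcap_k B(e_{i_k},b_k)$, which is open in the compact-open topology. Hence the compact-open topology is finer than the product topology.

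For the converse, take $D\in B(m,a)$ and write $m=\sum_{k=1}^n c_k e_{i_k}$ as a finite sum, which is possible because $\{e_i\}$ is a basis. The product-open set $N=\bigcap_k B(e_{i_k},D(e_{i_k}))$ contains $D$. Every $D'\in N$ satisfies $D'(m)=\sum_k c_kD(e_{i_k})=a$, so $N\subset B(m,a)$. The same argument applies to finite intersections of such sets by taking the union of the finitely many basis indices involved. Therefore every compact-open set is product-open, and the two topologies agree.

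There is no serious obstacle in this argument. The only point requiring care is the reduction of compact-open subbasic sets to the sets $B(m,a)$. This reduction rests on compact subsets of a discrete space being finite, and on $B(K,U)$ being a union over the choices of values in $U$ for the finitely many points of $K$.
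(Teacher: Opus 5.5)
Your proof is correct and follows essentially the same route as the paper: both reduce the compact-open topology to the subbasic sets $B(m,a)$, note that the product cylinders are exactly the $B(e_i,a)$, and show that $B(m,a)$ is product-open by expanding $m$ in the basis. The paper writes $B(m,a)$ as a union of intersections $\bigcap_{m_i\neq 0}B(e_i,a_i)$ with $\sum_i m_ia_i=a$, whereas you exhibit such a neighborhood around each point, which amounts to the same thing; your check that $B(K,U)$ is a union of finite intersections of the $B(m,a)$ spells out a step the paper leaves implicit.
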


\begin{proof}
    Since $M$ and $A$ are discrete, the compact-open topology of $M^\vee$ is generated by subsets of the form $B(m,a)=\{f\in \Hom_A(M,A)|f(m)=a\}$, where $m\in M$, $a\in A$. Similarly, the product topology of $M^\vee\simeq \prod_{i\in I}Ae_i^\vee$ is generated by subsets of the form $\{a_ie_i^\vee\}\times\prod_{j\neq i}Ae_j^\vee$. It is immediate to see that, $\{a_ie_i^\vee\}\times\prod_{j\neq i}Ae_j^\vee=B(e_i,a)$; also, if $m=\sum_{i}m_ie_i$, then $B(m,a)$ is a union of subsets of the form $\cap_{i,m_i\neq 0}B(e_i,a_i)$ where $\sum_i m_ia_i=a$. Therefore the compact-open topology of $M^\vee$ coincides with the product topology of $M^\vee\simeq \prod_{i\in I}Ae_i^\vee$.   
\end{proof}

\begin{ex}\label{ex:dual-of-divided-power}
    Consider the Hopf algebra ($\F_p$,$\F_p\langle t\rangle$), where the underlying ring of $\F_p\langle t\rangle$ is the divided power polynomial over $\F_p$ generated by the indeterminate $t$, and the underlying co-multiplication is given by the ring homomorphism $\F_p\langle t\rangle\xrightarrow{t\mapsto t\otimes 1+1\otimes t}\F_p\langle t\rangle\otimes_{\F_p}\F_p\langle t\rangle$. 
    
    Then $\Hom_{\F_p}(\F_p\langle t\rangle,\F_p)\simeq \F_p[[t^\vee]]$, whose underlying ring is the formal power series ring generated by $t^\vee$ which is the dual basis of $t$, and it is also equipped with the co-multiplication $\F_p[[t^\vee]]\xrightarrow{t^\vee\mapsto t^\vee\otimes 1+1\otimes t^\vee} \F_p[[t^\vee]]\widehat{\otimes}_{\F_p}\F_p[[t^\vee]]$.

    The underlying ring of $\F_p[[t^\vee]]\simeq \prod_{\N}\F_p$, if equipped with the product topology where $\F_p$ is discrete, is a profinite ring. Further, if by $\Hom_{\F_p}(\F_p[[t^\vee]],\F_p)$ it means the group of continuous $\F_p$-homomorphisms from $\F_p[[t^\vee]]$ to $\F_p$, we have $\Hom_{\F_p}(\F_p[[t^\vee]],\F_p)\simeq \F_p\langle t\rangle$.


\end{ex}


\begin{rem}\label{rem:profinite-dual-as-an-A-algebra}
    By taking the $\eta_L$-linear dual of the co-unit $\epsilon:\Gamma\rightarrow A$, there is a map $\epsilon^\vee:A\rightarrow \Gamma^\vee$, such that for $a\in A$, $\epsilon^\vee(a)=a\epsilon \in \Gamma^\vee$, and it is immediate to see that $\epsilon^\vee:A\rightarrow \Gamma^\vee$ is a ring homomorphism. However, in general $\epsilon^\vee(A)$ does not lie in the center of $\Gamma^\vee$, and $\Gamma^\vee$ admits two $A$-module structures:
    $$
    (a,D)\mapsto aD\quad\text{or}\quad D\circ (a\epsilon),\quad \text{where $a\in A$, $D\in \Gamma^\vee$},
    $$
    we will refer to the first as the left $A$-module structure of $\Gamma^\vee$, and the latter as the right $A$-module structure of $\Gamma^\vee$. 
\end{rem}

\begin{rem}\label{rem:right-module-structure-for-profinite-dual}
    The right unit map $\eta_R:A\rightarrow \Gamma$, making $\Gamma$ also an $A$-module, induces an $A$-module structure on $\Gamma^\vee=\Hom_{A,\eta_L}(\Gamma,A)$, such that 
    $$
    (a,D)\mapsto D(\eta_R(a)\cdot -),\quad \text{where $a\in A$, $D\in \Gamma^\vee$},
    $$
    and it is immediate to check, by definition of the multiplication of $\Gamma^\vee$, that $D(\eta_R(a)\cdot-):\Gamma\rightarrow A$ is the composite
    $$
    D\circ(a\epsilon):\Gamma\xrightarrow{\Delta}\Gamma\otimes_{\eta_R,A,\eta_L}\Gamma\xrightarrow{\id_\Gamma\otimes a\epsilon}\Gamma\xrightarrow{D}A,
    $$
    and therefore this $A$-module structure on $\Gamma^\vee$ induced by $\eta_R$ is exactly the right $A$-module structure introduced in Remark \ref{rem:profinite-dual-as-an-A-algebra}.
\end{rem}

\begin{rem}\label{rem:conjugation-and-dual}
    The conjugation $c:\Gamma\rightarrow \Gamma$ induces an isomorphism (which is abusively still denoted as $c$)
    $$
    c:\Hom_{A,\eta_L}(\Gamma,A)\xrightarrow{\sim} \Hom_{A,\eta_R}(\Gamma,A),
    $$
    which suggests that the formation of the dual algebra $\Gamma^\vee$ is essentially independent of the choice of $\eta_L$ or $\eta_R$. 
\end{rem}

\begin{rem}\label{rem:profinite-dual-to-A}
    By taking the $\eta_L$-linear dual of the left unit $\eta_L:A\rightarrow \Gamma$, there is a map $\eta_L^\vee:\Gamma^\vee\rightarrow A$, such that for $D:\Gamma\rightarrow A$, $\eta_L^\vee(D)=D(1)$. It is obvious that $\eta_L^\vee:\Gamma^\vee\rightarrow A$ is linear with respect to the left $A$-module structure of $\Gamma^\vee$, but in general it is not linear with respect to the right $A$-module structure on $\Gamma^\vee$.
\end{rem}

\begin{rem}\label{rem:A-as-profinite-dual-module}
    The right unit map $\eta_R:A\rightarrow \Gamma$ gives rise to an action of $\Gamma^\vee$ on $A$: for $D\in \Gamma^\vee=\Hom_{A,\eta_L}(\Gamma,A)$, $D$ acts on $A$ through the following composite
    $$
    A\xrightarrow{\eta_R}\Gamma\xrightarrow{D}A.
    $$
    In fact, through this action $A$ will be regarded as a discrete $\Gamma^\vee$-module, that is, it corresponds to a continuous ring homomorphism $\Gamma^\vee\rightarrow \Hom(A,A)$, which is moreover a homomorphism of $A$-algebras, where $\Gamma^\vee$ is regarded as an $A$-algebra through $\epsilon^\vee$, and $\Hom(A,A)$ is regarded as an $A$-algebra through the multiplication of $A$. 

    In general, a (right) $\Gamma$-comodule can be naturally regarded as a (left) discrete $\Gamma^\vee$-module. Roughly, let $M$ be a right $\Gamma$-comodule with the comodule structure map $\psi_M:M\rightarrow M\otimes_{A,\eta_L}\Gamma$, then for $D\in \Gamma^\vee$, $D$ acts on $M$ through the following composite
    $$
    M\xrightarrow{\psi_M}M\otimes_{A,\eta_L}\Gamma\xrightarrow{D}M.
    $$
    Details will come in the sequel.
\end{rem}

\begin{defn}\label{defn:modules-over-profinite-dual}
    Let $\Mod^{d}_{\Gamma^\vee}$ denote the category of (left) discrete $\Gamma^\vee$-modules with continuous $\Gamma^\vee$-actions, that is, 
    \begin{itemize}
        \item an object of $\Mod^d_{\Gamma^\vee}$ is a discrete abelian group $M$ together with a continuous ring homomorphism $\Gamma^\vee\rightarrow \Hom_\Z(M,M)$, where $\Hom_\Z(M,M)$ is equipped with the compact-open topology;

        \item a morphism between two such objects $M$ and $N$ is a morphism of abelian groups $f:M\rightarrow N$ such that the following diagram is commutative:
        $$
        \xymatrix@R=50pt@C=50pt{
        \Gamma^\vee \ar[r] \ar[d] & \Hom_\Z(M,M) \ar[d]^{f\circ -} \\
        \Hom_\Z(N,N) \ar[r]^{-\circ f}       & \Hom_\Z(M,N).
        }
        $$
    \end{itemize}

    Similarly, let $\Mod^d_{\Gamma^{\vee}_\op}$ denote the category of right discrete $\Gamma^\vee$-modules with continuous $\Gamma^\vee$-actions, where $\Gamma^\vee_{\op}$ denotes the opposite algebra of $\Gamma^\vee$.
\end{defn}

\begin{construction}\label{construction:covariant-translation-for-profinitely-generated}
    Let $M$ be a right $\Gamma$-comodule with the comodule structure map 
    $$
    \psi_M:M\rightarrow M\otimes_{A,\eta_L}\Gamma.
    $$
    
    There is a canonical isomorphism 
    $$
    M\otimes_{A,\eta_L}\Gamma\simeq \Hom_A(\Gamma^\vee,M),
    $$
    where $\Hom_A(\Gamma^\vee,M)$ is the group of continuous $A$-homomorphisms of topological $A$-modules, and $\Gamma^\vee$ by definition is equipped with the compact-open topology, and $M$ is equipped with the discrete topology.

    Further, the map $\psi_M:M\rightarrow M\otimes_{A,\eta_L}\Gamma\simeq \Hom_A(\Gamma^\vee,M)$, by adjunction, corresponds to a map 
    $$
    \widetilde{\psi}_M:\Gamma^\vee\rightarrow \Hom(M,M), 
    $$
    which is in fact a continuous homomorphism of topological rings, where $\Hom(M,M)$ is equipped with the compact-open topology. 
    
    This construction will be justified in the sequel. In fact, through this construction we will have an equivalence of categories 
    $$
    \RCoMod_{\Gamma}\simeq \Mod^d_{\Gamma^\vee}.
    $$
\end{construction}

\begin{lem}\label{lem:tensor-Hom-profinitely-generated}
    Let $N$ be a free $A$-module, $N'$ an $A$-module, then the following canonical morphism is an isomorphism:
    $$
    N\otimes_A N'\xrightarrow{\sim} \Hom_A(N^\vee,N'),
    $$
    where $N^\vee=\Hom_A(N,A)$, and $\Hom_A(N^\vee,N')$ is the group of continuous $A$-homomorphisms of topological $A$-modules, and $N^\vee$ is equipped with the compact-open topology, and $N'$ the discrete topology.

    Note that (see Remark \ref{rem:tensor-Hom-profinitely-generated-topology}), the compact-open topology for $\Hom_A(N^\vee,N')$ is discrete. In particular, take $N'=A$, there is the canonical isomorphism of topological $A$-modules 
    $$
    N\xrightarrow{\sim}\Hom_A(N^\vee,A),
    $$
    where, of course, $\Hom_A(N^\vee,A)$ denotes the group of continuous $A$-homomorphisms.
\end{lem}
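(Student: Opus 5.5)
We plan to reduce everything to a basis computation, using Lemma \ref{lem:dual-of-free-elaborated}. Choose a basis $\{e_i\}_{i\in I}$ of $N$. By Lemma \ref{lem:dual-of-free-elaborated}, $N^\vee\simeq\prod_{i\in I}Ae_i^\vee$ with the product topology, each factor discrete. The canonical map $\Phi:N\otimes_A N'\to\Hom^{un}_A(N^\vee,N')$ sends $n\otimes n'$ to $f\mapsto f(n)n'$. We first check that its image consists of continuous maps. An element of $N\otimes_A N'$ can be written uniquely as a finite sum $\sum_{i\in J}e_i\otimes n_i'$ with $J\subset I$ finite. Its image is the map $f\mapsto\sum_{i\in J}f(e_i)n_i'$. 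This map factors through the projection $N^\vee\to\prod_{i\in J}Ae_i^\vee=A^J$, which is continuous, and $A^J$ is discrete since $J$ is finite. Hence the map is continuous.

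Injectivity follows by evaluating on the dual basis elements $e_j^\vee\in N^\vee$: the image of $\sum e_i\otimes n_i'$ sends $e_j^\vee$ to $n_j'$. So if the image vanishes, every $n_j'=0$.

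Surjectivity is the main point. Let $g:N^\vee\to N'$ be continuous and $A$-linear. Since $N'$ is discrete, $g^{-1}(0)$ is an open neighborhood of $0$. By the description of the product topology, it contains a basic open subgroup
$$
U_J=\{f\in N^\vee \mid f(e_i)=0\text{ for } i\in J\},
$$
with $J$ finite. Set $n_i'=g(e_i^\vee)$ for $i\in J$, and consider $h=g-\Phi(\sum_{i\in J}e_i\otimes n_i')$. Any $f\in N^\vee$ decomposes as $f=\sum_{i\in J}f(e_i)e_i^\vee+f_0$ with $f_0\in U_J$. Then $A$-linearity gives $h(f)=g(f_0)-0=0$. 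Thus $g$ lies in the image. The only subtlety is that continuity is used exactly once, namely to obtain the finite set $J$; this is the step to state carefully, since without continuity the statement fails for $I$ infinite.

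For the final assertions, take $N'=A$ to obtain $N\xrightarrow{\sim}\Hom_A(N^\vee,A)$. The discreteness of the compact-open topology on $\Hom_A(N^\vee,N')$ is deferred to Remark \ref{rem:tensor-Hom-profinitely-generated-topology}, as the statement indicates. If it is to be argued here, we would show that each singleton $\{g\}$ is open. The element $g$ is determined by its values on the finitely many $e_i^\vee$, $i\in J$, so it suffices to produce a compact set whose constraints pin down $g$. The natural candidate is the set $\{e_i^\vee\}_{i\in J}$ together with a compact set controlling the remaining coordinates, for instance the product $\prod_{i\notin J}\{0,e_i^\vee\}$-type compact subsets, which are compact by Tychonoff. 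These would force any nearby continuous map to vanish on a neighborhood of $0$ of the same shape, and hence to agree with $g$.
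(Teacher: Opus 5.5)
Your proposal is correct and takes essentially the same route as the paper: choose a basis and identify $N^\vee\simeq\prod_{i\in I}Ae_i^\vee$. For surjectivity, use continuity of $g$ to find a finite $J$ with $\{f : f(e_i)=0,\ i\in J\}\subseteq\Ker(g)$; for injectivity, evaluate on the dual basis elements $e_j^\vee$. Your sketch of discreteness, which the paper defers to Remark~\ref{rem:tensor-Hom-profinitely-generated-topology}, is also sound: it rightly uses an infinite compact set (your Tychonoff product, or simply $\{0\}\cup\{e_i^\vee\}_{i\notin J}$) to force the coordinates outside $J$ to vanish, a point the paper's remark leaves implicit.
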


\begin{proof}   
    Choose a basis $\{e_i\}_{i\in I}$ of $N$ as a free $A$-module. Then $N\simeq \oplus_{i\in I}Ae_i$, and $N^\vee\simeq \prod_{i\in I}Ae_i^\vee$. Let $f:N^\vee\simeq \prod_{i\in I}Ae_i^\vee\rightarrow N'$ be a continuous $A$-homomorphism, then $\Ker(f)$ is a neighborhood of $0$ in $N^\vee$, and therefore $\Ker(f)$ contains a subset of the form $\prod_{i\in I}U_ie_i^\vee$ where $0\in U_i\subset A$ and all but finitely many $U_i=A$; therefore and the canonical morphism $N\otimes_A N'\rightarrow\Hom_A(N^\vee,N')$ is an epimorphism. Also, under the identification 
    $N\otimes_A N'\simeq \oplus_{i\in I}(Ae_i\otimes N')$, the canonical morphism $N\otimes_A N'\rightarrow \Hom_A(N^\vee,N')$ is given by the assignment
    $$
    e_i\otimes n'\mapsto (e_i\otimes n':N^\vee\rightarrow N',(e_i\otimes n')(\{a_je_j^\vee\}_{j\in I})=a_in'),
    $$
    and it is immediate to see that it is a monomorphism.
\end{proof}

\begin{rem}\label{rem:tensor-Hom-profinitely-generated-topology}
    In Lemma \ref{lem:tensor-Hom-profinitely-generated}, the isomorphism $N\otimes_A N'\xrightarrow{\sim}\Hom_A(N^\vee,N')$ is in fact an isomorphism of topological $A$-modules; in other words, the compact-open topology of $\Hom_A(N^\vee,N')$ is discrete: choose a basis $\{e_i\}_{i\in I}$ of $N$ as an $A$-module, then a continuous morphism of $A$-modules $f:N^\vee\rightarrow N'$ is determined by its restriction a finite subset of $\{e_i^\vee\}\subseteq \prod_{i\in i}Ae_i^\vee\simeq N^\vee$, where $e_i^\vee$ is the dual basis of $e_i$, thus it is immediate to check by definition that the compact-open topology for $\Hom_A(N^\vee,A)$ is discrete.
\end{rem}

\begin{rem}\label{rem:tensor-Hom-profinitely-generated}
    Lemma \ref{lem:tensor-Hom-profinitely-generated} will be applied to the free $A$-module $N=\Gamma$, in particular, there is the canonical isomorphism of $A$-modules 
    $$
    \Gamma\xrightarrow{\sim}\Hom_A(\Gamma^\vee,A).
    $$
    Note that, $\Gamma^\vee$ is in fact an $A$-$A$-bimodule, namely, it has the left and right $A$-module structures as introduced in Remark \ref{rem:profinite-dual-as-an-A-algebra}; therefore, $\Hom_{A,left}(\Gamma^\vee,A)$ admits an $A$-module structure induced by the right $A$-module structure of $\Gamma^\vee$; according to Remark \ref{rem:right-module-structure-for-profinite-dual}, the $A,\eta_R$-module structure of $\Gamma$, through this canonical isomorphism, corresponds to the $A$-module structure on $\Hom_{A,left}(\Gamma^\vee,A)$ induced by the right $A$-module structure of $\Gamma^\vee$. 
\end{rem}

\begin{cor}\label{cor:tensor-Hom-profinitely-generated}
    Let $N'$ be an $A$-module. The canonical morphism of $A$-modules
    $$
    N'\otimes_{A,\eta_L}\Gamma\rightarrow \Hom_{A,left}(\Gamma^\vee,N')
    $$
    is an isomorphism, and is in fact an isomorphism of $A$-$A$-bimodules, where the extra $A$-module structure of $N'\otimes_{A,\eta_L}\Gamma$ is induced by the $A,\eta_R$-module structure of $\Gamma$, an the extra $A$-module structure of $\Hom_{A,left}(\Gamma^\vee,N')$ is induced by the right $A$-module structure of $\Gamma^\vee$.
\end{cor}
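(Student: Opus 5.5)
The plan is to reduce to Lemma \ref{lem:tensor-Hom-profinitely-generated} with $N=\Gamma$, viewed as a free $A$-module through $\eta_L$, and then check the bimodule compatibility using Remarks \ref{rem:right-module-structure-for-profinite-dual} and \ref{rem:tensor-Hom-profinitely-generated}.

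First, the canonical map is $n'\otimes\gamma\mapsto(D\mapsto D(\gamma)n')$. It is well defined on $N'\otimes_{A,\eta_L}\Gamma$, since $D$ is $\eta_L$-linear and so $D(\eta_L(a)\gamma)n'=D(\gamma)\,an'$. It is left $A$-linear with respect to the left $A$-module structure of $\Gamma^\vee$, since $(aD)(\gamma)n'=a\,D(\gamma)n'$. Each image is continuous: only finitely many $e_i^\vee$-coordinates of $D$ are involved, so the image factors through a finite projection of $\prod_{i}Ae_i^\vee$, which is continuous by Lemma \ref{lem:dual-of-free-elaborated}. After swapping tensor factors, this map is exactly the canonical map of Lemma \ref{lem:tensor-Hom-profinitely-generated} with $N=\Gamma$ (through $\eta_L$). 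That lemma then gives bijectivity at once: surjectivity because continuous maps have open kernel and so factor through finitely many coordinates, and injectivity by evaluating on the dual basis.

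Second, I will check compatibility of the extra $A$-module structures. On the source, $a$ acts by $n'\otimes\gamma\mapsto n'\otimes\eta_R(a)\gamma$. On the target, $a$ acts by $(a\cdot f)(D)=f(D\circ(a\epsilon))$. By Remark \ref{rem:right-module-structure-for-profinite-dual}, $D\circ(a\epsilon)=D(\eta_R(a)\cdot-)$. Hence the image of $n'\otimes\gamma$ under the action of $a$ sends $D$ to $D(\eta_R(a)\gamma)n'$, which is the image of $n'\otimes\eta_R(a)\gamma$. Both $A$-actions are additive, so checking on generators suffices. This is the same argument as in Remark \ref{rem:tensor-Hom-profinitely-generated}, now with coefficients in $N'$.

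The only delicate point is the well-definedness and continuity in the first step, namely that the $\eta_L$-tensor matches left linearity. This is routine given Lemma \ref{lem:dual-of-free-elaborated}, so I expect no real obstacle. The main care needed is bookkeeping the two $A$-structures correctly.
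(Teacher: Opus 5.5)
Your proof is correct and follows the paper's route. The paper likewise applies Lemma \ref{lem:tensor-Hom-profinitely-generated} to $N=\Gamma$ (via $\eta_L$) and gets the bimodule compatibility from Remark \ref{rem:tensor-Hom-profinitely-generated}, i.e.\ from the identity $D\circ(a\epsilon)=D(\eta_R(a)\cdot-)$ of Remark \ref{rem:right-module-structure-for-profinite-dual}; you have simply written out the well-definedness, continuity and compatibility checks that the paper leaves implicit.
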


\begin{proof}
    Combine Lemma \ref{lem:tensor-Hom-profinitely-generated} and Remark \ref{rem:tensor-Hom-profinitely-generated}.
\end{proof}

\begin{lem}\label{lem:tensor-Hom-adjunction-profinite}
    Let $M,M'$ be $A$-modules. Then there is a canonical isomorphism of abelian groups 
    $$
    \Hom_{A,right}(M,\Hom_{A,left}(\Gamma^\vee,M'))\simeq \Hom_{A-A}(\Gamma^\vee,\Hom_\Z(M,M')),
    $$
    where $\Hom_{A-A}(-,-)$ denotes the group of continuous $A$-$A$-bimodule homomorphisms, and the $A$-$A$-bimodule structure of $\Hom_{\Z}(M,M')$ is induced by the $A$-module structures of $M,M'$.

    In fact, the following canonical monomorphisms 
    $$
    \Hom_{A,right}(M,\Hom_{A,left}(\Gamma^\vee,M'))\rightarrow \Hom^{un}_{A,left}(M\otimes_{A,right}\Gamma^\vee,M'),
    $$
    $$
    \Hom_{A-A}(\Gamma^\vee,\Hom_\Z(M,M'))\rightarrow \Hom_{A,left}^{un}(M\otimes_{A,right}\Gamma^\vee,M'),
    $$
    have the same image.
\end{lem}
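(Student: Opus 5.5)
Both groups embed in $\Hom^{un}_{A,left}(M\otimes_{A,right}\Gamma^\vee,M')$, so the plan is to write down the two embeddings explicitly and check that they have the same image. The isomorphism of the statement then follows at once, since both maps are injective.

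\emph{First embedding.} Take $\phi\in\Hom_{A,right}(M,\Hom_{A,left}(\Gamma^\vee,M'))$. Here $M$ is an $A$-module and $\Hom_{A,left}(\Gamma^\vee,M')$ carries the $A$-module structure induced by the right $A$-module structure of $\Gamma^\vee$. Send $\phi$ to the map $m\otimes D\mapsto \phi(m)(D)$. This is well defined on $M\otimes_{A,right}\Gamma^\vee$ because $\phi(am)(D)=\phi(m)(D\circ(a\epsilon))$. It is left $A$-linear because each $\phi(m)$ is. Injectivity is clear.

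\emph{Second embedding.} Take $\theta\in\Hom_{A-A}(\Gamma^\vee,\Hom_\Z(M,M'))$ and send it to $m\otimes D\mapsto \theta(D)(m)$. Right $A$-linearity of $\theta$ means $\theta(D\circ(a\epsilon))=\theta(D)\circ a$, which is exactly balancedness over $A$. Left $A$-linearity of $\theta$ gives left $A$-linearity of the resulting map. Again injectivity is clear.

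\emph{Common image.} Both images consist of maps $g:M\otimes_{A,right}\Gamma^\vee\to M'$ that are left $A$-linear. It remains to compare the two continuity conditions.

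For the first image, the condition is that for each fixed $m$ the map $D\mapsto g(m\otimes D)$ is continuous from $\Gamma^\vee$ to the discrete module $M'$. By Lemma \ref{lem:dual-of-free-elaborated}, this means each such map factors through a projection of $\Gamma^\vee\simeq\prod_i Ae_i^\vee$ onto finitely many coordinates.

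For the second image, the condition is that $D\mapsto g(-\otimes D)$ is continuous into $\Hom_\Z(M,M')$ with the compact-open topology. Since $M$ is discrete, compact subsets of $M$ are finite. So the compact-open topology on $\Hom_\Z(M,M')$ is the topology of pointwise convergence, with subbasic opens $\{f : f(m)=m'\}$. Continuity of a map into a space with this topology is equivalent to continuity of each evaluation $D\mapsto g(m\otimes D)$. Hence the two conditions coincide, and the two images are equal.

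The main obstacle is this identification of the compact-open topology on $\Hom_\Z(M,M')$ with pointwise convergence, together with the check that continuity into it is tested on subbasic opens. One must also confirm that the bimodule conventions match: the right $A$-action on $\Gamma^\vee$ corresponds to precomposition by $a$ on $M$, and the left action corresponds to postcomposition on $M'$. This matching uses Remarks \ref{rem:profinite-dual-as-an-A-algebra} and \ref{rem:right-module-structure-for-profinite-dual}.
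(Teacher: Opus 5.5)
Your proof is correct, but it takes a different route from the paper's.

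\textbf{The paper's route.} The paper argues the way tensor--Hom adjunction is usually proved. It handles $M=A$ by hand, sending $f$ to $f(-)(1)$. It then declares the case $M=\oplus_I A$ clear and passes to general $M$ through a presentation $\oplus_{I'}A\to\oplus_I A\to M\to 0$. That reduction quietly needs $\Hom_{A-A}(\Gamma^\vee,\Hom_\Z(-,M'))$ to turn direct sums into products and cokernels into kernels \emph{as topological objects}. Concretely, the compact-open topology on $\Hom_\Z(\oplus_I A,M')$ must be the product topology, and the topology on $\Hom_\Z(M,M')$ must be the subspace topology. Both facts reduce to the pointwise-convergence description that you make explicit.

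\textbf{What your route buys.} You work directly inside $\Hom^{un}_{A,left}(M\otimes_{A,right}\Gamma^\vee,M')$ and characterize both images there. The algebraic conditions coincide: balancedness corresponds to right $A$-linearity, and left linearity matches on both sides. Continuity of $\theta$ into a pointwise-convergence topology is tested one evaluation at a time, so the topological conditions coincide too. This proves the ``same image'' clause directly, which the paper's proof never addresses separately. It also needs no presentation of $M$ and isolates the single topological input.

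\textbf{Two small points.}
\begin{itemize}
\item The continuity of $\phi\colon M\to\Hom_{A,left}(\Gamma^\vee,M')$ required by the convention is automatic only because $M$ is discrete; say so.
\item Your aside says each $D\mapsto g(m\otimes D)$ factors through finitely many coordinates. This needs additivity: the kernel is open, so it contains a basic neighbourhood of $0$. Lemma \ref{lem:dual-of-free-elaborated} alone does not give it, since a merely locally constant map on an infinite product of infinite discrete sets need not factor this way. You never use the aside, so nothing breaks.
\end{itemize}
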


\begin{proof}
    We first treat the special case where $M=A$, and it amounts to establish a canonical isomorphism 
    $$
    \Hom_{A-A}(\Gamma^\vee,\Hom_\Z(A,M'))\simeq \Hom_{A,left}(\Gamma^\vee,M').
    $$
    An element $f\in \Hom_{A-A}(\Gamma^\vee,\Hom_\Z(A,M'))$ is of the form $(f(-))(\star)$, such that for $D\in \Gamma^\vee$, $(f(D))(\star):A\rightarrow M'$ is a group homomorphism, and for $a,a'\in A$, $(f(aDa'))(\star)=a(f(D))(a'\cdot\star)$; thus there is indeed a canonical map 
    \begin{align*}
    \Hom_{A-A}(\Gamma^\vee,\Hom_\Z(A,M')) & \rightarrow & \Hom_{A,left}(\Gamma^\vee,M')\\
    f:\Gamma^\vee\rightarrow \Hom_{\Z}(A,M') & \mapsto & (f(-))(1):\Gamma^\vee\rightarrow M',
    \end{align*}
    and it is routine to check that it is an isomorphism.

    It is then clear that, if $M=\oplus_I A$, the statement holds. In general, it suffices to embed $M$ into an exact sequence of the form 
    $$
    \oplus_{I'}A\rightarrow \oplus_I A\rightarrow M\rightarrow 0.
    $$
\end{proof}

\begin{rem}\label{rem:tensor-Hom-adjunction-profinite-1}
    If the topological consideration is ignored, Lemma \ref{lem:tensor-Hom-adjunction-profinite} is the application of the following more general statement: let $M$ be an $R$-module, $M'$ be an $R'$-module, and $N$ an $R$-$R'$-bimodule, then there are the canonical isomorphisms
    \begin{align*}
    \Hom_R(M,\Hom_{R'}(N,M'))
    &\simeq \Hom_{R'}(M\otimes_R N,M') \\
    &\simeq \Hom_{R-R'}(N,\Hom_\Z(M,M')).
    \end{align*}
\end{rem}

\begin{rem}\label{rem:tensor-Hom-adjunction-profinite-2}
    Combine Corollary \ref{cor:tensor-Hom-profinitely-generated} and Lemma \ref{lem:tensor-Hom-adjunction-profinite}, it is easy to figure out that, given a morphism $f:M\rightarrow M'\otimes_{A,\eta_L}\Gamma\simeq \Hom_{A,left}(\Gamma^\vee,M')$, there is the corresponding morphism $\widetilde{f}:\Gamma^\vee\rightarrow \Hom_\Z(M,M')$, such that for $D\in \Gamma^\vee$, $\widetilde{f}(D):M\rightarrow M'$ is the following composite
    $$
    M\xrightarrow{f}M'\otimes_{A,\eta_L}\Gamma\xrightarrow{\id_{M'}\otimes D}M'.
    $$
\end{rem}

\begin{lem}\label{lem:covariant-translation-for-profinitely-generated}
    Let $M$ be an $A$-module equipped with an $A,\eta_R$-linear map
    $$
    \psi_M:M\rightarrow M\otimes_{A,\eta_L}\Gamma.
    $$
    By Corollary \ref{cor:tensor-Hom-profinitely-generated} and Lemma \ref{lem:tensor-Hom-adjunction-profinite}, $\psi_M\in \Hom_{A,\eta_R}(M,M\otimes_{A,\eta_L}\Gamma)\simeq \Hom_{A,right}(M,\Hom_{A,left}(\Gamma^\vee,M))$ corresponds to $\widetilde{\psi}_M\in \Hom_{A-A}(\Gamma^\vee,\Hom_\Z(M,M))$, and vice versa.
    
    Then $\widetilde{\psi}_M:\Gamma^\vee\rightarrow \Hom_\Z(M,M)$ is an $A$-algebra homomorphism if and only if $\psi_M:M\rightarrow M\otimes_{A,\eta_L}\Gamma$ is a comodule structure map, that is, if and only if $\psi_M$ is co-associative and co-unitary.
\end{lem}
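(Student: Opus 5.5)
The plan is to use the explicit formula of Remark \ref{rem:tensor-Hom-adjunction-profinite-2}: for $D\in\Gamma^\vee$, $\widetilde{\psi}_M(D)=(\id_M\otimes D)\circ\psi_M$. I will compare, on each side, the two conditions defining an $A$-algebra homomorphism (unitality and multiplicativity) with the two comodule axioms (co-unitarity and co-associativity). Compatibility with the $A$-$A$-bimodule structures is already built into the correspondence of Lemma \ref{lem:tensor-Hom-adjunction-profinite}. In particular $\widetilde{\psi}_M(a\epsilon)=a\,\widetilde{\psi}_M(\epsilon)$, so once unitality holds, $\widetilde{\psi}_M\circ\epsilon^\vee$ is the structure map $A\to\Hom_\Z(M,M)$. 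Hence being an $A$-algebra map reduces to unitality plus multiplicativity.

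For unitality, $\widetilde{\psi}_M(\epsilon)=(\id_M\otimes\epsilon)\circ\psi_M$. By definition this equals $\id_M$ exactly when $\psi_M$ is co-unitary, so this equivalence is immediate.

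For multiplicativity, fix $D_1,D_2\in\Gamma^\vee$. Using the definition of $D_2\circ D_1$ in Definition \ref{defn:profinite-dual},
\begin{align*}
\widetilde{\psi}_M(D_2\circ D_1)&=(\id_M\otimes D_2)(\id_M\otimes\id_\Gamma\otimes D_1)(\id_M\otimes\Delta)\psi_M,\\
\widetilde{\psi}_M(D_2)\circ\widetilde{\psi}_M(D_1)&=(\id_M\otimes D_2)(\id_M\otimes\id_\Gamma\otimes D_1)(\psi_M\otimes\id_\Gamma)\psi_M .
\end{align*}
The second line uses that $D_1$ is $\eta_L$-linear and $\psi_M$ is $\eta_R$-linear, which lets $\id_M\otimes D_1$ commute past $\psi_M\otimes\id_\Gamma$. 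Co-associativity therefore immediately gives multiplicativity.

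The converse is the main step. Set $\Phi=(\id_M\otimes\Delta)\psi_M-(\psi_M\otimes\id_\Gamma)\psi_M$, a map $M\to M\otimes_A\Gamma\otimes_A\Gamma$. Multiplicativity says $(\id_M\otimes D_2)(\id\otimes\id\otimes D_1)\Phi(m)=0$ for all $D_1,D_2$. Since $\Gamma$ is free over $A$ via $\eta_L$ with basis $\{e_i\}$, the module $M\otimes_{A,\eta_L}\Gamma\otimes_{\eta_R,A,\eta_L}\Gamma$ is, as an $A$-module, $\bigoplus_{i,j}M\cdot(e_i\otimes e_j)$. Here I must check that $\Gamma\otimes_{\eta_R,A,\eta_L}\Gamma$ is free over $A$ via $\eta_L$ on the first factor with basis $e_i\otimes e_j$. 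This holds because $\Gamma\otimes_{\eta_R,A,\eta_L}\Gamma\simeq\bigoplus_j\Gamma\otimes e_j$ and each summand is free on the $e_i$.

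Write $\Phi(m)=\sum m_{ij}\,e_i\otimes e_j$ (finite sum) and take $D_1=e_j^\vee$, $D_2=e_i^\vee$. The composite then extracts a combination in which the coefficient $m_{ij}$ must be isolated. The subtlety is that $\id\otimes\id\otimes e_j^\vee$ produces $\eta_R(\text{coefficient})$ in the middle factor, not a pure basis element. I expect to handle this by choosing the basis expression on the right as $\Gamma\otimes_{\eta_R,A,\eta_L}(\bigoplus_j A e_j)$ first, and then expanding $\Gamma$ on the left in the $\eta_L$-basis. Evaluating $e_j^\vee$ then gives $\sum_i m_{ij}e_i$, and evaluating $e_i^\vee$ gives $m_{ij}$. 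This bookkeeping with the two $A$-structures is the obstacle I expect to take care. The fallback is to use Corollary \ref{cor:tensor-Hom-profinitely-generated} twice, identifying the target with continuous bi-additive functions on $\Gamma^\vee\times\Gamma^\vee$, which are determined by their values on pairs. Either way $\Phi=0$, which is co-associativity.

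Unitality and co-unitarity were already matched directly, so this completes the equivalence.
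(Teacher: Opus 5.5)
Your proposal is correct and follows the paper's proof. You match co-unitarity with $\widetilde{\psi}_M(\epsilon)=\id_M$ directly, compare multiplicativity with co-associativity through the same two composites (using $\psi_M\circ(\id_M\otimes D)=(\id_{M\otimes_{A,\eta_L}\Gamma}\otimes D)\circ(\psi_M\otimes\id_\Gamma)$), and let bimodule compatibility upgrade a unital ring map to an $A$-algebra map, all as the paper does.

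For the converse, the paper separates maps into $M\otimes_{A,\eta_L}\Gamma\otimes_{\eta_R,A,\eta_L}\Gamma$ by iterating the tensor--Hom adjunction, which is your fallback. Your dual-basis extraction with $D_1=e_j^\vee$, $D_2=e_i^\vee$ is a concrete version of the same step and works as written: since $e_j^\vee(e_{j'})\in\{0,1\}$, the $\eta_R$-twist in the middle factor is harmless.
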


\begin{proof}
    First we demonstrate that, $\psi_M$ is co-associative if and only if $\widetilde{\psi}_M$ preserves multiplication. Note that, for any $D\in\Gamma^\vee$ there is always the following commutative diagram
    $$
    \xymatrix@R=50pt@C=50pt{
    M\otimes_{A,\eta_L}\Gamma \ar[r]^-{\psi_M\otimes\id_\Gamma} \ar[d]^{\id_M\otimes D} & M\otimes_{A,\eta_L}\Gamma\otimes_{\eta_R,A,\eta_L}\Gamma \ar[d]^{\id_{M\otimes_{A,\eta_L}\Gamma}\otimes D} \\
    M \ar[r]^{\psi_M} & M\otimes_{A,\eta_L}\Gamma,
    }
    $$
    since the two ways to go from the upper left corner to the lower right corner are both $\psi_M\otimes D$. The co-associativity of $\psi_M$ is the commutativity of the following diagram
    $$
    \xymatrix@R=50pt@C=50pt{
    M \ar[r]^{\psi_M} \ar[d]^{\psi_M} & M\otimes_{A,\eta_L}\Gamma \ar[d]^{\id_M\otimes\Delta} \\
    M\otimes_{A,\eta_L}\Gamma \ar[r]^-{\psi_M\otimes \id_\Gamma} & M\otimes_{A,\eta_L}\Gamma\otimes_{\eta_R,A,\eta_L}\Gamma;
    }
    $$
    on the other hand, for any $D_1,D_2\in \Gamma^\vee$, $\widetilde{\psi}_M(D_2\circ D_1)$ is the following composite:
    $$
    M\xrightarrow{\psi_M}M\otimes_{A,\eta_L}\Gamma\xrightarrow{\id_M\otimes\Delta}M\otimes_{A,\eta_L}\Gamma\otimes_{\eta_R,A,\eta_L}\Gamma\xrightarrow{\id_{M\otimes_{A,\eta_L}\Gamma}\otimes D_1}M\otimes_{A,\eta_L}\Gamma\xrightarrow{\id_M\otimes D_2}M,
    $$
    and $\widetilde{\psi}_M(D_2)\circ \widetilde{\psi}_M(D_1)$ is the following composite:
    $$
    M\xrightarrow{\psi_M}M\otimes_{A,\eta_L}\Gamma\xrightarrow{\psi_M\otimes\id_\Gamma}M\otimes_{A,\eta_L}\Gamma\otimes_{\eta_R,A,\eta_L}\Gamma\xrightarrow{\id_{M\otimes_{A,\eta_L}\Gamma}\otimes D_1}M\otimes_{A,\eta_L}\Gamma\xrightarrow{\id_M\otimes D_2}M;
    $$
    and we would like to demonstrate that, $\psi_M$ is co-associative if and only if, for any $D_1,D_2\in \Gamma^\vee$, $\widetilde{\psi}_M(D_2\circ D_1)=\widetilde{\psi}_M(D_2)\circ\widetilde{\psi}_M(D_1)$. Apply Lemma \ref{lem:tensor-Hom-adjunction-profinite} iteratively, a morphism $M\rightarrow M\otimes_{A,\eta_L}\Gamma\otimes_{\eta_R,A,\eta_L}\Gamma$ corresponds to a morphism $\Gamma^\vee\rightarrow \Hom_\Z(M,M\otimes_{A,\eta_L}\Gamma)\simeq \Hom_{A,left}(\Gamma^\vee,\Hom_\Z(M,M))$; therefore, two morphisms $f,g:M\rightarrow M\otimes_{A,\eta_L}\Gamma\otimes_{\eta_R,A,\eta_L}\Gamma$ are the same if and only if for any $D_1,D_2\in \Gamma^\vee$,
    $$
    f\circ (\id_M\otimes D_2)\circ (\id_{M\otimes_{A,\eta_L}\Gamma}\otimes D_1)=g\circ (\id_M\otimes D_2)\circ (\id_{M\otimes_{A,\eta_L}\Gamma}\otimes D_1).
    $$
    As a consequence, $\psi_M$ is co-associative if and only if $\widetilde{\psi}_M$ preserves multiplication.

    Then we demonstrate that, $\psi_M$ is co-unitary if and only if $\widetilde{\psi}_M$ preserves the multiplicative identity. The co-unitarity of $\psi_M$ is the commutativity of the following diagram 
    $$
    \xymatrix@R=50pt@C=50pt{
    M \ar[rr]^{\id_M} \ar[dr]^{\psi_M} &\  & M \\
    \ & M\otimes_{A,\eta_L}\Gamma; \ar[ur]^{\id_M\otimes \epsilon} & \ 
    }
    $$
    on the other hand, the multiplicative identity of $\Gamma^\vee$ is the co-unit map $\epsilon:\Gamma\rightarrow A$, and $\widetilde{\psi}_M(\epsilon)$ is the following composite
    $$
    M\xrightarrow{\psi_M}M\otimes_{A,\eta_L}\Gamma\xrightarrow{\id_M\otimes\epsilon}M.
    $$
    It is then obvious that $\psi_M$ is co-unitary if and only if $\widetilde{\psi}_M$ preserves the multiplicative identity.

    Finally, note that $\widetilde{\psi}_M:\Gamma^\vee\rightarrow \Hom_\Z(M,M)$ is already an $A$-$A$-bimodule map, and when it is a ring homomorphism, it is automatically an $A$-algebra homomorphism.
\end{proof}

\begin{thm}\label{thm:covariant-translation-for-profinitely-generated}
    Construction \ref{construction:covariant-translation-for-profinitely-generated} gives rise to an equivalence of categories
    $$
    \RCoMod_\Gamma\xrightarrow{\sim} \Mod^d_{\Gamma^\vee},
    $$
    where $\RCoMod_\Gamma$ is the category of right $\Gamma$-comodules and $\Mod^d_{\Gamma^\vee}$ is the category of discrete $\Gamma^\vee$-modules.
\end{thm}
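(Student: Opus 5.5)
We will build the functor $\Phi:\RCoMod_\Gamma\to\Mod^d_{\Gamma^\vee}$ on objects via Construction \ref{construction:covariant-translation-for-profinitely-generated}, and construct an inverse on objects by running the correspondence backwards. Given a comodule $(M,\psi_M)$, Corollary \ref{cor:tensor-Hom-profinitely-generated} and Lemma \ref{lem:tensor-Hom-adjunction-profinite} turn $\psi_M$ into a continuous $A$-$A$-bimodule map $\widetilde{\psi}_M:\Gamma^\vee\to\Hom_\Z(M,M)$. Lemma \ref{lem:covariant-translation-for-profinitely-generated} shows that it is a ring homomorphism, so $M$ with the discrete topology is an object of $\Mod^d_{\Gamma^\vee}$.

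Conversely, let $M$ be a discrete abelian group with a continuous ring map $\rho:\Gamma^\vee\to\Hom_\Z(M,M)$. Restricting along $\epsilon^\vee:A\to\Gamma^\vee$ makes $M$ an $A$-module, since $\epsilon^\vee$ is a ring map. We then check that $\rho$ is a map of $A$-$A$-bimodules, because $\rho(a\epsilon\cdot D\cdot a'\epsilon)=\rho(a\epsilon)\rho(D)\rho(a'\epsilon)$. By Lemma \ref{lem:tensor-Hom-adjunction-profinite}, $\rho$ therefore corresponds to an element of $\Hom_{A,right}(M,\Hom_{A,left}(\Gamma^\vee,M))$, which Corollary \ref{cor:tensor-Hom-profinitely-generated} identifies with an $\eta_R$-linear map $\psi:M\to M\otimes_{A,\eta_L}\Gamma$. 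Lemma \ref{lem:covariant-translation-for-profinitely-generated} then says $\psi$ is co-associative and co-unitary. The two assignments are mutually inverse because the identifications used are bijections. Note that the $A$-module structure is recovered: in one direction it is the given one, and in the other it is forced by $\epsilon^\vee$.

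\textbf{Main obstacle: continuity.} This is where the topology and the free-of-countable-rank hypothesis enter. We need to know that $\rho(-)(m)$ is continuous for each fixed $m$, which is what membership in $\Hom_{A,left}(\Gamma^\vee,M)$ requires. By the compact-open topology on $\Hom_\Z(M,M)$ with $M$ discrete, continuity of $\rho$ means exactly that for each $m$ the orbit map $D\mapsto\rho(D)(m)$ is locally constant. This is what the proof of Lemma \ref{lem:tensor-Hom-adjunction-profinite} (the case $M=A$, then presentations) uses, so we would verify that continuity is preserved in both directions there. Conversely, for $\widetilde{\psi}_M$, each $\psi_M(m)$ is a finite sum $\sum m_i\otimes\gamma_i$. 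Hence $\widetilde{\psi}_M(D)(m)=\sum m_iD(\gamma_i)$ depends only on finitely many values of $D$, and is locally constant by Lemma \ref{lem:dual-of-free-elaborated}. Hence continuity on both sides reduces to Remark \ref{rem:tensor-Hom-profinitely-generated-topology}.

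On morphisms, an $A$-linear $f:M\to N$ is a comodule map iff $(f\otimes\id)\psi_M=\psi_N f$. By injectivity of Corollary \ref{cor:tensor-Hom-profinitely-generated}, two elements of $N\otimes_{A,\eta_L}\Gamma$ agree iff their images under all $\id\otimes D$ agree. So $f$ is a comodule map iff $f\circ\widetilde{\psi}_M(D)=\widetilde{\psi}_N(D)\circ f$ for all $D$, which is the commuting square in Definition \ref{defn:modules-over-profinite-dual}. Such an $f$ is automatically $A$-linear, by taking $D=a\epsilon$. Thus $\Phi$ is fully faithful and bijective on objects, hence an equivalence, and functoriality is clear.
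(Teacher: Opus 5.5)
Your proposal is correct and follows the paper's own route. Corollary \ref{cor:tensor-Hom-profinitely-generated} and Lemma \ref{lem:tensor-Hom-adjunction-profinite} pass between $\psi_M$ and $\widetilde{\psi}_M$, Lemma \ref{lem:covariant-translation-for-profinitely-generated} handles the (co)algebra axioms, and $\epsilon^\vee$ recovers the $A$-module structure in the reverse direction. You also spell out two points the paper only asserts: the continuity check via locally constant orbit maps $D\mapsto\rho(D)(m)$, and the morphism-level comparison using injectivity of Corollary \ref{cor:tensor-Hom-profinitely-generated}. Note that this continuity check needs only discreteness of $M$, not the discreteness statement of Remark \ref{rem:tensor-Hom-profinitely-generated-topology}, and that only freeness of $\Gamma$, not countability of the basis, is actually used.
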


\begin{proof}
    It suffices to refer to Corollary \ref{cor:tensor-Hom-profinitely-generated}, Lemma \ref{lem:tensor-Hom-adjunction-profinite}, and Lemma \ref{lem:covariant-translation-for-profinitely-generated}, together with Construction \ref{construction:covariant-translation-for-profinitely-generated}. Nevertheless, we repeat the essential points.

    Given a right $\Gamma$-comodule $M$ with comodule structure map $\psi_M:M\rightarrow M\otimes_{A,\eta_L}\Gamma$. According to Corollary \ref{cor:tensor-Hom-profinitely-generated}, Lemma \ref{lem:tensor-Hom-adjunction-profinite}, and Lemma \ref{lem:covariant-translation-for-profinitely-generated}, $\psi_M:M\rightarrow M\otimes_{A,\eta_L}\Gamma\simeq \Hom_{A,left}(\Gamma^\vee,M)$ corresponds to a continuous $A$-algebra homomorphism $\widetilde{\psi}_M:\Gamma^\vee\rightarrow \Hom_\Z(M,M)$, that is, $M$ is a discrete $\Gamma^\vee$-module. It is obvious that in this way we have a functor $\RCoMod_\Gamma\rightarrow \Mod^d_{\Gamma^\vee}$.

    This functor is fully faithful: according to Lemma \ref{lem:covariant-translation-for-profinitely-generated}, the $\Gamma^\vee$-module structure corresponds exactly to the $\Gamma$-comodule structure.

    This functor is essentially surjective: let $M$ be a discrete $\Gamma$-module, then through $\epsilon^\vee:A\rightarrow \Gamma^\vee$, $M$ is regarded as an $A$-module, and again Lemma \ref{lem:covariant-translation-for-profinitely-generated} implies that $M$ is a right $\Gamma$-comodule.
\end{proof}

\section{The Functoriality}\label{section:the-functoriality}

In this section we discuss the functoriality for the construction of dual algebras of Hopf algebroids. 

\begin{rem}\label{rem:functoriality-for-comodules}
     Let $f=(f_o,f_m):(A,\Gamma)\rightarrow (B,\Sigma)$ be a morphism of Hopf algebroids. There is the functor $f^*:\RCoMod_\Gamma\rightarrow \RCoMod_{\Sigma}$, which sends a $\Gamma$-comodule $M$, with the comodule structure $M\xrightarrow{\psi_M}M\otimes_{A,\eta_L}\Gamma$, to a $\Sigma$-comodule $M\otimes_A B$, with the comodule structure map $M\otimes_A B\rightarrow (M\otimes_A B)\otimes_{B,\eta_R}\Sigma$ induced from the following composite:
     $$
     M\xrightarrow{\psi_M}M\otimes_{A,\eta_L}\Gamma\xrightarrow{\id_M\otimes g}M\otimes_{A,\eta_L}\Sigma\simeq (M\otimes_{A}B)\otimes_{B,\eta_R}\Sigma. 
     $$
\end{rem}

\begin{rem}\label{rem:equivalence-of-Hopf-algebroids}
    Let $f=(f_o,f_m):(A,\Gamma)\rightarrow (B,\Sigma)$ be a morphism of Hopf algebroids. In general this morphism does not immediately induce a morphism $\Sigma^\vee\rightarrow \Gamma^\vee$, although, with the perspective of Proposition \ref{thm:covariant-translation-for-profinitely-generated}, there should always be a functor $\Mod^d_{\Gamma^\vee}\rightarrow \Mod^d_{\Sigma^\vee}$ corresponding to $\RCoMod_\Gamma\rightarrow \RCoMod_{\Sigma}$. However, it is still possible to understand this functoriality through a morphism of dual algebras, by choosing a suitable Hopf algebroid ``equivalent to $(A,\Gamma)$''. 
    
    Consider the following morphisms
    $$
    \alpha:A\xrightarrow{\eta_L}\Gamma \xrightarrow{\id_\Gamma\otimes f} \Gamma\otimes_{\eta_R,A,f}B,
    $$
    $$
    \beta:B\otimes_{f,A,\eta_L}\Gamma\otimes_{\eta_R,A,f}B\xrightarrow{\eta_L\otimes g\otimes\eta_R}\Sigma;
    $$
    in particular, there is the natural factorization
    $$
    f:(A,\Gamma)\rightarrow (B,B\otimes_{f,A,\eta_L}\Gamma\otimes_{\eta_R,A,f}B)\rightarrow (B,\Sigma).
    $$
\end{rem}

\begin{thm}[{\cite[Theorem 13]{Naumann07}}]\label{thm:equivalence-of-Hopf-algebroids}
    Let $f=(f_o,f_m):(A,\Gamma)\rightarrow (B,\Sigma)$ be a morphism of flat Hopf algebroids (in the sense that the left and right unit maps are flat), with associated morphisms $\alpha$ and $\beta$ determined in Remark \ref{rem:equivalence-of-Hopf-algebroids}. Then the following statements are equivalence:
    \begin{itemize}
        \item[(i)] $f$ induces an isomorphism of the associated stacks.
        
        \item[(ii)] $f^*:\RCoMod_\Gamma\rightarrow \RCoMod_{\Sigma}$ is an equivalence. 

        \item[(iii)] $\alpha$ is faithfully flat and $\beta$ is an isomorphism.
    \end{itemize}
\end{thm}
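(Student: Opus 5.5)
The plan is to prove the cycle (i)$\Leftrightarrow$(iii) geometrically and (i)$\Rightarrow$(ii)$\Rightarrow$(iii) via comodule categories, following Naumann's argument. Throughout, write $\mathfrak{X}_\Gamma,\mathfrak{X}_\Sigma$ for the associated rigidified stacks with covers $e_A:\Spec(A)\to\mathfrak{X}_\Gamma$ and $e_B:\Spec(B)\to\mathfrak{X}_\Sigma$ (Remark \ref{rem:Hopf-algebroids-and-stacks}). Let $\bar f:\mathfrak{X}_\Gamma\to\mathfrak{X}_\Sigma$ be the induced morphism, and let $e':\Spec(B)\to\mathfrak{X}_\Gamma$ be the composite $\Spec(B)\to\Spec(A)\to\mathfrak{X}_\Gamma$.

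\textbf{Step 1: (i)$\Leftrightarrow$(iii).} First I will identify both conditions in (iii) with geometric statements about $e'$. Using the pullback square of Remark \ref{rem:Hopf-algebroids-and-stacks}, $\Spec(A)\times_{\mathfrak{X}_\Gamma}\Spec(B)\simeq\Spec(\Gamma\otimes_{\eta_R,A,f}B)$. Under this identification $\Spec(\alpha)$ is the projection to $\Spec(A)$, so $\alpha$ is faithfully flat exactly when $e'$ is an fpqc cover: faithful flatness can be tested after the faithfully flat base change $e_A$. Similarly, $\Spec(B)\times_{\mathfrak{X}_\Gamma}\Spec(B)\simeq\Spec(B\otimes_{f,A,\eta_L}\Gamma\otimes_{\eta_R,A,f}B)$. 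Then $\beta$ is the comparison map from this to $\Spec(B)\times_{\mathfrak{X}_\Sigma}\Spec(B)=\Spec(\Sigma)$, induced by $\bar f$.

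Thus (iii) says precisely that $e'$ presents $\mathfrak{X}_\Gamma$ by the groupoid $(B,\Sigma)$. Since a stack is the quotient of any flat cover by its Čech groupoid, this gives $\mathfrak{X}_\Gamma\simeq\mathfrak{X}_\Sigma$ compatibly with $\bar f$, which is (i). Conversely, if $\bar f$ is an isomorphism, then $e'$ identifies with $e_B$. Hence $e'$ is faithfully flat and its Čech groupoid is $(B,\Sigma)$, which gives (iii).

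\textbf{Step 2: (i)$\Rightarrow$(ii).} This is immediate from $\QCoh(\mathfrak{X})\simeq\RCoMod$ (Remark \ref{rem:comodules-and-quasi-coherent-sheaves}). Under these equivalences $f^*$ corresponds to $\bar f^*$, which is an equivalence when $\bar f$ is an isomorphism.

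\textbf{Step 3: (ii)$\Rightarrow$(iii).} This is the main step. For faithful flatness of $\alpha$: the forgetful functor $\RCoMod_\Sigma\to\Mod_B$ is exact and faithful because $\Sigma$ is flat, and $f^*$ is an exact equivalence. Hence the functor $M\mapsto M\otimes_AB$ is exact and faithful on $\RCoMod_\Gamma$. I will apply this to extended comodules $N\otimes_{A}\Gamma$ for arbitrary $A$-modules $N$. These are comodules, and $N\mapsto N\otimes_A\Gamma$ is exact and faithful by flatness and co-unitarity. The composite is $N\mapsto N\otimes_{A}(\Gamma\otimes_{\eta_R,A,f}B)$, so $\alpha$ is faithfully flat.

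For $\beta$: I will use the right adjoint $f_*$ of $f^*$, which is given by a cotensor product, and compute it on the extended $\Sigma$-comodule $\Sigma$. Since $f^*$ is an equivalence, the counit $f^*f_*\Sigma\to\Sigma$ is an isomorphism. I expect to identify $f^*f_*\Sigma$ with $B\otimes_{A}\Gamma\otimes_AB$ and the counit with $\beta$. Making this identification precise, i.e.\ showing that the cotensor product $\Sigma\,\square_\Sigma(\ldots)$ really reduces to $B\otimes_A\Gamma\otimes_AB$ using flatness of $\alpha$, is where I expect the real difficulty to lie. As a fallback, I would instead deduce (i) directly from (ii), using that flat-Hopf-algebroid stacks are recovered from their (tensor) categories of quasi-coherent sheaves, since $f^*$ is symmetric monoidal.
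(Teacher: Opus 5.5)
Your outline, with (i)$\Leftrightarrow$(iii) done geometrically and then (i)$\Rightarrow$(ii)$\Rightarrow$(iii) via comodules, is the standard argument behind the cited result. The paper itself gives no proof beyond referring to Naumann and Hovey--Strickland. Steps 1 and 2, and the faithful-flatness half of Step 3, are fine.

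The gap is the other half of (ii)$\Rightarrow$(iii): you never show that $\beta$ is an isomorphism. You announce an expected identification of the counit with $\beta$, and a Tannakian fallback, but carry out neither. The fallback costs far more than the theorem: it needs Lurie-type Tannaka duality for geometric stacks plus a tameness check on the inverse of $f^*$. The identification you expect to be hard is in fact formal, and needs neither cotensor products nor flatness of $\alpha$.

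Let $G$ be a quasi-inverse of $f^*$. It is right adjoint to $f^*$, and the counit $f^*G\to\id$ is an isomorphism. For a $B$-module $P$, regarded as an $A$-module via $f_o$, the cofree adjunctions give isomorphisms natural in $M\in\RCoMod_\Gamma$:
\[
\Hom_\Sigma(f^*M,P\otimes_{B,\eta_L}\Sigma)\cong\Hom_B(M\otimes_A B,P)\cong\Hom_A(M,P)\cong\Hom_\Gamma(M,P\otimes_{A,\eta_L}\Gamma).
\]
Taking $P=B$, Yoneda gives $G(\Sigma)\cong B\otimes_{f,A,\eta_L}\Gamma$, and hence $f^*G(\Sigma)\cong B\otimes_{f,A,\eta_L}\Gamma\otimes_{\eta_R,A,f}B$.

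Tracing $\id_{G(\Sigma)}$ through this chain, the counit corresponds to the $B$-linear functional $b\otimes\gamma\otimes b'\mapsto b\,f_o(\epsilon(\gamma))\,b'$. Compose it with the coaction of $f^*G(\Sigma)$, which is built from $\Delta(\gamma)=\sum\gamma'\otimes\gamma''$ and $f_m$. This gives
\[
b\otimes\gamma\otimes b'\mapsto\sum\eta_L\big(b\,f_o(\epsilon(\gamma'))\big)\,f_m(\gamma'')\,\eta_R(b')=\eta_L(b)\,f_m(\gamma)\,\eta_R(b').
\]
The equality uses $\eta_L\circ f_o=f_m\circ\eta_L$ (left units of $\Sigma$ and $\Gamma$ respectively) and counitality $\sum\eta_L(\epsilon(\gamma'))\gamma''=\gamma$. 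The result is exactly $\beta$, so $\beta$ is an isomorphism.

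A smaller point concerns directions. The ring maps $f_o,f_m$ induce $\mathfrak{X}_\Sigma\to\mathfrak{X}_\Gamma$, not $\mathfrak{X}_\Gamma\to\mathfrak{X}_\Sigma$, and $f^*$ is pullback along this map. Likewise $\Spec(\beta)$ goes from $\Spec(B)\times_{\mathfrak{X}_\Sigma}\Spec(B)$ to $\Spec(B)\times_{\mathfrak{X}_\Gamma}\Spec(B)$. This is harmless for the logic, since the map is supposed to be an isomorphism, but Step 2 as written pairs $f^*$ with a pullback in the wrong direction.
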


\begin{proof}
    Refer to \cite[Theorem 13]{Naumann07} or \cite[Theorem 6.2]{HS05}.
\end{proof}

\begin{construction}\label{construction:base-change-of-Hopf-algebroids}
    Let $(A,\Gamma)$ be a Hopf algebroid, and $f:A\rightarrow B$ a ring homomorphism. As it is mentioned in Remark \ref{rem:equivalence-of-Hopf-algebroids}, the pair of rings $(B,B\otimes_{f,A,\eta_L}\Gamma\otimes_{\eta_R,A,f}B)$ can naturally be equipped with the structure of a Hopf algebroid. More precisely, the left and right units $\eta_L,\eta_R:B\rightarrow B\otimes_{f,A,\eta_L}\Gamma\otimes_{\eta_R,A,f}B$ are given as the inclusions of the first and the third factors; the co-multiplication is determined as the following composite
    \begin{align*}
        B\otimes_{f,A,\eta_L}\Gamma\otimes_{\eta_R,A,f}B
        & \xrightarrow{\id\otimes \Delta\otimes \id}B\otimes_{f,A,\eta_L}(\Gamma\otimes_{\eta_R,A,\eta_L}\Gamma)\otimes_{\eta_R,A,f}B \\
        & \rightarrow B\otimes_{f,A,\eta_L}(\Gamma\otimes_{\eta_R,A} B\otimes_{A,\eta_L}\Gamma)\otimes_{\eta_R,A,f}B \\
        & \xrightarrow{\sim}(B\otimes_{f,A,\eta_L}\Gamma\otimes_{\eta_R,A,f}B)\otimes_{B}(B\otimes_{f,A,\eta_L}\Gamma\otimes_{\eta_R,A,f}B);
    \end{align*}
    the co-unit is determined as the following composite
    $$
    B\otimes_{f,A,\eta_L}\Gamma\otimes_{\eta_R,A,f}B\xrightarrow{\id\otimes \epsilon\otimes\id}B\otimes_A B \xrightarrow{\text{multiplication}} B;
    $$
    finally, the conjugation $c:B\otimes_{f,A,\eta_L}\Gamma\otimes_{\eta_R,A,f}B\rightarrow B\otimes_{f,A,\eta_L}\Gamma\otimes_{\eta_R,A,f}B$ is induced by the conjugation of $\Gamma$ together with interchanging the first and the third factors. It is routine to demonstrate that the axioms for Hopf algebroids are satisfied; write $\Gamma_f:=B\otimes_{f,A,\eta_L}\Gamma\otimes_{\eta_R,A,f}B$, and there is the natural morphism of Hopf algebroids
    $$
    (f,\iota):(A,\Gamma) \rightarrow (B,\Gamma_f).
    $$
\end{construction}

\begin{cor}\label{cor:equivalence-of-Hopf-algebroids}
    Let $(A,\Gamma)$ be a flat Hopf algebroid, and let $f:A\rightarrow B$ be a faithfully flat ring homomorphism. Then the morphism $(f,\iota):(A,\Gamma)\rightarrow (B,\Gamma_f=B\otimes_{f,A,\eta_L}\Gamma\otimes_{\eta_R,A,f}B)$ of Construction \ref{construction:base-change-of-Hopf-algebroids} induces an equivalence of categories 
    $$
    (f,\iota)^*:\RCoMod_\Gamma\rightarrow \RCoMod_{\Gamma_f}.
    $$
\end{cor}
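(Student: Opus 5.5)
The plan is to deduce the corollary from Theorem \ref{thm:equivalence-of-Hopf-algebroids}. Specifically, I would verify condition (iii) for the morphism $(f,\iota):(A,\Gamma)\rightarrow (B,\Gamma_f)$ and then conclude (ii). Before doing so, I need to check that the hypotheses of that theorem hold, namely that $(B,\Gamma_f)$ is again a flat Hopf algebroid. Construction \ref{construction:base-change-of-Hopf-algebroids} already supplies the Hopf algebroid structure. For flatness of the left unit $B\rightarrow \Gamma_f=B\otimes_{f,A,\eta_L}\Gamma\otimes_{\eta_R,A,f}B$, I would factor it as
$$
B\rightarrow B\otimes_{A,\eta_L}\Gamma\rightarrow B\otimes_{A,\eta_L}\Gamma\otimes_{\eta_R,A}B.
$$
The first map is the base change of $\eta_L$ along $f$, hence flat. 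The second is the base change of $f$ along $\eta_R$, hence (faithfully) flat. The right unit is handled symmetrically. Here I use that $\eta_R$ is flat whenever $\eta_L$ is: the conjugation $c$ interchanges $\eta_L$ and $\eta_R$.

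Next I identify the maps $\alpha$ and $\beta$ of Remark \ref{rem:equivalence-of-Hopf-algebroids} in this setting, taking $\Sigma=\Gamma_f$ and $g=\iota$. The map $\beta:B\otimes_{f,A,\eta_L}\Gamma\otimes_{\eta_R,A,f}B\rightarrow \Gamma_f$ sends $b\otimes\gamma\otimes b'$ to $\eta_L(b)\,\iota(\gamma)\,\eta_R(b')=b\otimes\gamma\otimes b'$. It is therefore the identity, and in particular an isomorphism; this is a direct unwinding of the definitions in Construction \ref{construction:base-change-of-Hopf-algebroids}.

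The substantive step is to show that $\alpha:A\xrightarrow{\eta_L}\Gamma\rightarrow \Gamma\otimes_{\eta_R,A,f}B$ is faithfully flat. I would write $\alpha$ as the composite of $\eta_L:A\rightarrow\Gamma$ with the base change $\Gamma\rightarrow\Gamma\otimes_{\eta_R,A,f}B$ of $f$ along $\eta_R$. The second map is faithfully flat because $f$ is. For $\eta_L$, flatness is given, but faithful flatness has to be checked. I would obtain it from the counit: $\epsilon\circ\eta_L=\id_A$, so for any $A$-module $N$ the module $N$ is a retract of $N\otimes_{A,\eta_L}\Gamma$. Hence $N\otimes_{A,\eta_L}\Gamma=0$ forces $N=0$, which gives faithful flatness. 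A composite of faithfully flat maps is faithfully flat, so $\alpha$ is faithfully flat.

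With (iii) established, Theorem \ref{thm:equivalence-of-Hopf-algebroids} yields that $(f,\iota)^*$ is an equivalence. The step I expect to require the most care is the faithful flatness of $\alpha$, in particular making sure the tensor product in $\alpha$ is taken over the correct module structure ($\eta_R$ on $\Gamma$). The remaining steps are bookkeeping.
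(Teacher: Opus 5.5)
Your proposal is correct and takes essentially the same route as the paper: reduce via Theorem \ref{thm:equivalence-of-Hopf-algebroids} to faithful flatness of $\alpha$, get faithful flatness of $\eta_L$ from the counit retraction $\epsilon\circ\eta_L=\id_A$, and compose with the faithfully flat base change of $f$ along $\eta_R$. Your extra checks, that $(B,\Gamma_f)$ is again flat and that $\beta$ is the identity, supply hypotheses of that theorem which the paper's proof leaves implicit.
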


\begin{proof}
    According to Theorem \ref{thm:equivalence-of-Hopf-algebroids}, it suffices to check that, the morphism 
    $$
    \alpha:A\xrightarrow{\eta_L}\Gamma\xrightarrow{\id_\Gamma\otimes f}\Gamma\otimes_{\eta_R,A,f}B
    $$
    is faithfully flat. 

    Note that, since $(A,\Gamma)$ is a flat Hopf algebroid, the left (or right) unit $\eta_L:A\rightarrow \Gamma$ is in fact faithfully flat: $\eta_L$ is flat by assumption, and the axioms of Hopf algebroids implies that, for any $A$-module $M$, the following composite
    $$
    M\xrightarrow{m\mapsto m\otimes 1}M\otimes_{A,\eta_L}\Gamma\xrightarrow{\id\otimes\epsilon}M
    $$
    is the identity of $M$, and therefore $M=0$ if and only if $M\otimes_{A,\eta_L}\Gamma=0$.
    
    It is also clear that, since $f$ is faithfully flat,  $\id_\Gamma\otimes f:\Gamma\rightarrow \Gamma\otimes_{\eta_R,A,f}B$ is also faithfully flat. Therefore $ \alpha:A\xrightarrow{\eta_L}\Gamma\xrightarrow{\id_\Gamma\otimes f}\Gamma\otimes_{\eta_R,A,f}B$ is faithfully flat.
\end{proof}

\begin{construction}\label{construction:functoriality-for-dual-algebras}
    Let $(A,\Gamma)$, $(B,\Sigma)$ be free Hopf algebroids (Convention \ref{convention:the-dual-Hopf-algebroids}), and $f=(f_o,f_m):(A,\Gamma)\rightarrow (B,\Sigma)$ be a morphism of Hopf algebroids such that the morphism $ \alpha:A\xrightarrow{\eta_L}\Gamma\xrightarrow{\id_\Gamma\otimes f}\Gamma\otimes_{\eta_R,A,f}B$ is faithfully flat and makes  $\Gamma\otimes_{\eta_R,A,f}B$ a free $A$-module. Let $\Gamma_f=B\otimes_{f,A,\eta_L}\Gamma\otimes_{\eta_R,A,f}B$, then $(B,\Gamma_f)$ is also a free Hopf algebroid, and $f$ admits the following factorization 
    $$
    (A,\Gamma)\xrightarrow{(f,\iota)} (B,\Gamma_f)\xrightarrow{(\id,\beta)} (B,\Sigma),
    $$
    and the morphism $(\id,\beta):(B,\Gamma_f)\rightarrow (B,\Sigma)$ immediately induces a continuous homomorphism of topological algebras 
    $$
    \beta^\vee:\Sigma^\vee\rightarrow \Gamma_f^\vee,
    $$
    which further induces, through restriction, a functor 
    $$
    \beta^\vee_*:\Mod^d_{\Gamma_f^\vee}\rightarrow \Mod^d_{\Sigma^\vee}. 
    $$
    Recall that, by Proposition \ref{thm:covariant-translation-for-profinitely-generated}, there are the equivalences of categories $\RCoMod_{\Gamma_f}\simeq \Mod^d_{\Gamma_f^\vee}$, $\RCoMod_{\Sigma}\simeq \Mod^d_{\Sigma^\vee}$, and, combining Theorem \ref{thm:equivalence-of-Hopf-algebroids}, there are the equivalences of categories $\RCoMod_{\Gamma}\simeq \RCoMod_{\Gamma_f}\simeq \Mod^d_{\Gamma_f^\vee}$.
\end{construction}

\begin{prop}\label{prop:functoriality-for-dual-algebras}
    In Construction \ref{construction:functoriality-for-dual-algebras}, under the equivalences of categories $\RCoMod_{\Gamma_f}\simeq \Mod^d_{\Gamma_f^\vee}$, and $\RCoMod_{\Gamma}\simeq \RCoMod_{\Gamma_f}\simeq \Mod^d_{\Gamma_f^\vee}$, the functor $f^*:\RCoMod_\Gamma\rightarrow \RCoMod_{\Sigma}$ can be identified as $\beta^\vee_*:\Mod^d_{\Gamma_f^\vee}\rightarrow \Mod^d_{\Sigma^\vee}$.
\end{prop}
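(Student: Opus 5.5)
The plan is to split $f^*$ along the factorization $f=(\id,\beta)\circ(f,\iota)$ of Construction \ref{construction:functoriality-for-dual-algebras}. Pullback of comodules is functorial, so $f^*\cong(\id,\beta)^*\circ(f,\iota)^*$. For the composite $(\id,\beta)\circ(f,\iota)$, the object map is $f_o$ and the morphism map is $\beta\circ\iota=f_m$. On a comodule $M$, both sides are given by $M\otimes_A B$, with structure map induced by $(\id_M\otimes f_m)\circ\psi_M$. The identification $\RCoMod_\Gamma\simeq\RCoMod_{\Gamma_f}$ used in the statement is, by definition, $(f,\iota)^*$ (Corollary \ref{cor:equivalence-of-Hopf-algebroids}, Theorem \ref{thm:equivalence-of-Hopf-algebroids}). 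It therefore suffices to prove that, under the equivalences of Theorem \ref{thm:covariant-translation-for-profinitely-generated} for $(B,\Gamma_f)$ and $(B,\Sigma)$, the functor $(\id,\beta)^*:\RCoMod_{\Gamma_f}\to\RCoMod_\Sigma$ corresponds to restriction of scalars $\beta^\vee_*$.

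For the reduced claim, let $N$ be a $\Gamma_f$-comodule with structure map $\psi_N$. Since the object map is $\id_B$, $(\id,\beta)^*N$ has underlying $B$-module $N$, and its structure map is $(\id_N\otimes\beta)\circ\psi_N:N\to N\otimes_{B,\eta_L}\Sigma$. Remark \ref{rem:tensor-Hom-adjunction-profinite-2} describes the induced $\Sigma^\vee$-action explicitly: $D\in\Sigma^\vee$ acts on $N$ by
$$
N\xrightarrow{\psi_N}N\otimes_{B,\eta_L}\Gamma_f\xrightarrow{\id_N\otimes\beta}N\otimes_{B,\eta_L}\Sigma\xrightarrow{\id_N\otimes D}N,
$$
which equals $(\id_N\otimes(D\circ\beta))\circ\psi_N$. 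Here $\beta^\vee(D)=D\circ\beta$, and this composite is exactly the action of $\beta^\vee(D)\in\Gamma_f^\vee$ on $N$. Hence the two $\Sigma^\vee$-module structures on $N$ agree. On morphisms both functors are the identity on underlying maps, so the identification is natural.

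Two preliminary points have to be checked first, since the statement implicitly uses them. First, $\beta^\vee$ must be a continuous $B$-algebra homomorphism. It is well defined because $\beta$ is $B$-linear for the $\eta_L$-structures. It is multiplicative because $\beta$ intertwines the co-multiplications and co-units, and it is unital because $\epsilon_\Sigma\circ\beta=\epsilon_{\Gamma_f}$. It is continuous because the preimage of $B(\sigma,b)$ contains $B(\beta\cdot\cdot,\cdot)$-type sets; concretely, preimages of the basic sets $B(\sigma,b)$ in $\Sigma^\vee$ are the basic sets $B(\beta\text{-image},b)$ in $\Gamma_f^\vee$, using the description of the topology in Lemma \ref{lem:profinite-dual-topological-algebra}. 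Second, $(B,\Gamma_f)$ must be free, so that Theorem \ref{thm:covariant-translation-for-profinitely-generated} applies. This follows from the hypothesis that $\Gamma\otimes_{\eta_R,A,f}B$ is free, after base change along $f$ on the left.

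I expect the main obstacle to be bookkeeping rather than any real difficulty. Specifically, one must confirm that the module structures on both sides, including the underlying $B$-module structure recovered through $\epsilon^\vee$, coincide, and that the equivalence $\RCoMod_\Gamma\simeq\Mod^d_{\Gamma_f^\vee}$ is compatible with $(f,\iota)^*$ up to natural isomorphism. For the $B$-module structure, I would use that $\beta^\vee\circ\epsilon_\Sigma^\vee=\epsilon_{\Gamma_f}^\vee$, which follows from $\beta$ respecting co-units and $\eta_L$. With that in place, the composite of natural isomorphisms gives the asserted identification $f^*\simeq\beta^\vee_*$.
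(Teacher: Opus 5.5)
Your proposal is correct and follows the paper's route. The paper likewise reduces the claim to checking that $\beta^\vee_*$ agrees with $(\id,\beta)^*$ under the equivalences and calls this clear, while you supply the explicit computation $(\id_N\otimes D)\circ(\id_N\otimes\beta)\circ\psi_N=(\id_N\otimes(D\circ\beta))\circ\psi_N$ together with the compatibility $\beta^\vee\circ\epsilon_\Sigma^\vee=\epsilon_{\Gamma_f}^\vee$. One small slip: your continuity sentence has the direction reversed. For $\beta^\vee\colon\Sigma^\vee\to\Gamma_f^\vee$, the preimage of a basic set $B(\gamma,b)\subset\Gamma_f^\vee$ is the basic set $B(\beta(\gamma),b)\subset\Sigma^\vee$, not the other way around.
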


\begin{proof}
    It suffices to check that the functor $\beta^\vee_*$, up to the stated equivalences of categories, is exactly $(\id,\beta)^*$, and this is clear.
\end{proof}

\begin{rem}\label{rem:functoriality-for-comodules-right-adjoint}
    Let $f=(f_o,f_m):(A,\Gamma)\rightarrow (B,\Sigma)$ be a morphism of Hopf algebroids. The functor $f^*:\RCoMod_\Gamma\rightarrow \RCoMod_\Sigma$ (mentioned in Remark \ref{rem:functoriality-for-comodules}) admits a right adjoint $f_*:\RCoMod_\Sigma\rightarrow \RCoMod_\Gamma$, which can be determined as follows.

    Given a $\Sigma$-comodule $N$. Through the functor $f^*$, $\Gamma\otimes_{\eta_R,A}B$ is equipped with the structure of a right $\Sigma$-comodule. Consider the comodule tensor (see Remark \ref{rem:symmetric-monoidal-structure-for-dual}) of $N$ and $\Gamma\otimes_{\eta_R,A}B$:
    $$
    (\Gamma\otimes_{\eta_R,A}B)\otimes_B N\simeq \Gamma\otimes_{\eta_R,A}N,
    $$
    then the group of $\Sigma$-comodule homomorphisms $\Hom_\Sigma(B,\Gamma\otimes_{\eta_R,A}N)$ is in fact a sub-left $\Gamma$-comodule of $\Gamma\otimes_{\eta_R,A}N$, where the structure map for the latter is $\Delta\otimes\id_N$. Finally, by applying the conjugation, this left $\Gamma$-comodule structure of $\Hom_\Sigma(B,\Gamma\otimes_{\eta_R,A}N)$ becomes a right $\Gamma$-comodule structure. Therefore the assignment $N\mapsto \Hom_\Sigma(B,\Gamma\otimes_{\eta_R,A}N)$ determines a functor $\RCoMod_\Sigma\rightarrow \RCoMod_\Gamma$, which turns out to be essentially $f_*$. 
\end{rem}

\begin{rem}\label{rem:functoriality-for-dual-algebras-right-adjoint}
    Resume Construction \ref{construction:functoriality-for-dual-algebras}. The homomorphism of topological algebras $\beta^\vee:\Sigma^\vee\rightarrow \Gamma_f^\vee$ also induces a functor 
    $$
    \Hom_{\Sigma^\vee}(\Gamma_f^\vee,-):\Mod_{\Sigma^\vee}\rightarrow \Mod_{\Gamma^\vee_f,}
    $$
    which restricts to a functor 
    $$
    \beta^{\vee !}:\Mod^d_{\Sigma^\vee}\rightarrow \Mod^d_{\Gamma^\vee_f}.
    $$
    Indeed, for any discrete $\Gamma^\vee$-module $N$, by Remark \ref{rem:tensor-Hom-profinitely-generated-topology}, the compact-open topology for the group of continuous homomorphisms $\Hom_{\Sigma^\vee}(\Gamma^\vee_f,N)$ is discrete. 

    Under the equivalences of categories $\RCoMod_{\Gamma_f}\simeq \Mod^d_{\Gamma_f^\vee}$, and $\RCoMod_{\Gamma}\simeq \RCoMod_{\Gamma_f}\simeq \Mod^d_{\Gamma_f^\vee}$, the functor $f_*:\RCoMod_\Sigma\rightarrow \RCoMod_\Gamma$ should be naturally isomorphic to $\beta^{\vee !}:\Mod^d_{\Sigma^\vee}\rightarrow \Mod^d_{\Gamma_f^\vee}$.
\end{rem}

\begin{rem}\label{rem:functoriality-for-dual-algebras-left-left-adjoint}
    Resume Construction \ref{construction:functoriality-for-dual-algebras}. The homomorphism of topological algebras $\beta^\vee:\Sigma^\vee\rightarrow \Gamma^\vee_f$ induces by base change a functor 
    $$
    \Gamma^\vee_f\otimes_{\Sigma^\vee}-:\Mod_{\Sigma^\vee}\rightarrow \Mod_{\Gamma^\vee_f},
    $$
    which, however, does not immediately give rise to a functor 
    $$
    \beta^{\vee *}:\Mod^d_{\Sigma^\vee}\rightarrow \Mod^d_{\Gamma_f^\vee}.
    $$
\end{rem}

\section{The Symmetric Monoidal Structure}\label{section:the-symmetric-monoidal-structure}

In this section we review the symmetric monoidal structure for comodules over Hopf algebroids from the perspective of corresponding dual algebras introduced in Section \ref{section:the-dual-algebras}. Keep Convention \ref{convention:the-dual-Hopf-algebroids}.

\begin{rem}\label{rem:symmetric-monoidal-structure-for-dual}
    The equivalence of categories $\RCoMod_\Gamma\xrightarrow{\sim}\Mod^d_{\Gamma^\vee}$, established in Proposition \ref{thm:covariant-translation-for-profinitely-generated}, is clearly an equivalence of abelian categories. Moreover, the category $\RCoMod_\Gamma$ admits a symmetric monoidal structure induced by the $A$-algebra structure of $\Gamma$: let $M,N$ be right $\Gamma$-comodules with comodule structure maps $\psi_M,\psi_N$, then $M\otimes_AN$ can be equipped with the right $\Gamma$-comodule structure map 
    \begin{align*}
        \psi_{M\otimes_A N}:M\otimes_A N & \xrightarrow{\psi_M\otimes\psi_N} (M\otimes_{A,\eta_L}\Gamma)\otimes_{\eta_R,A,\eta_R}(\Gamma\otimes_{\eta_L,A}N) \\
        & \xrightarrow{\sim} M\otimes_{A,\eta_L}(\Gamma\otimes_{\eta_R,A,\eta_R}\Gamma)\otimes_{\eta_L,A}N \\
        & \xrightarrow{\id_M\otimes \text{multiplication}\otimes \id_N}
        M\otimes_{A,\eta_L}\Gamma\otimes_{\eta_L,A}N \\
        &\xrightarrow{\sim}(M\otimes_A N)\otimes_{A,\eta_L}\Gamma,
    \end{align*}
    note that there is the commutative diagram 
    $$
    \xymatrix@R=50pt@C=75pt{
    A\otimes_\Z A \ar[r]^-{\text{multiplication}} \ar[d]^{\eta_L\otimes \eta_L} & A \ar[d]^{\eta_L} \\
    \Gamma\otimes_{\eta_R,A,\eta_R}\Gamma \ar[r]^-{\text{multiplication}} & \Gamma.
    }
    $$
    
    Through the equivalence $\RCoMod_\Gamma\xrightarrow{\sim}\Mod^d_{\Gamma^\vee}$, the category $\Mod^d_{\Gamma^\vee}$ certainly admits a symmetric monoidal structure, which should correspond to a ``co-multiplication'' of $\Gamma^\vee$.
\end{rem}

\begin{rem}\label{rem:co-multiplication-for-profinitely-generated}
    The Hopf algebroid structure of $(A,\Gamma)$ implies that, the multiplication $\Gamma\otimes_\Z\Gamma\rightarrow \Gamma$ can be linearized as $\Gamma\otimes_{A-A}\Gamma\rightarrow \Gamma$, where $\Gamma$ is regarded as an $A$-$A$-bimodule through the left and right units $\eta_L,\eta_R:A\rightarrow \Gamma$. By taking the $\eta_L$-linear dual, there is a morphism 
    $$
    \Gamma^\vee\rightarrow \Hom_{A,\eta_L}(\Gamma\otimes_{A-A}\Gamma,A),
    $$
    which, according to Remark \ref{rem:symmetric-monoidal-structure-for-dual}, is involved in the symmetric monoidal structure of $\RCoMod_\Gamma\simeq \Mod^d_{\Gamma^\vee}$. In fact, the Hopf algebroid structure of $(A,\Gamma)$ can naturally induce a Hopf algebroid structure on the pair of rings $(A,\Gamma\otimes_{A-A}\Gamma)$. 


\end{rem}

\begin{construction}\label{construction:tensor-Hopf-algebroid}
    For the pair of rings $(A,\Gamma\otimes_{A-A}\Gamma)$, the following morphisms are natural:
    \begin{itemize}
        \item the left unit map $\eta_L:A\xrightarrow{\text{left factor}}A\otimes A\rightarrow \Gamma\otimes_{A-A}\Gamma$, and the right unit map $\eta_R:A\xrightarrow{\text{right factor}}A\otimes A\rightarrow \Gamma\otimes_{A-A}\Gamma$;

        \item the co-unit map $\epsilon:\Gamma\otimes_{A-A}\Gamma\xrightarrow{\epsilon_\Gamma\otimes\epsilon_\Gamma}A\otimes A\xrightarrow{\text{multiplication}}A$, where $\epsilon_\Gamma:\Gamma\rightarrow A$ is the co-unit map for $(A,\Gamma)$;

        \item the conjugation map $c:\Gamma\otimes_{A-A}\Gamma\xrightarrow{c_\Gamma\otimes c_\Gamma}\Gamma\otimes_{A-A}\Gamma$, where $c_\Gamma:\Gamma\rightarrow \Gamma$ is the conjugation for $(A,\Gamma)$, and note that $c_\Gamma\otimes c_\Gamma$ also interchanges the two factors;

        \item the co-multiplication map is determined as follows. First consider the following morphism 
        $$
        \Gamma\otimes_{A-A}\Gamma\xrightarrow{\Delta_\Gamma\otimes \Delta_\Gamma}(\Gamma\otimes_{\eta_R,A,\eta_L}\Gamma)\otimes_{A-A}(\Gamma\otimes_{\eta_R,A,\eta_L}\Gamma),
        $$
        where $\Delta_\Gamma:\Gamma\rightarrow \Gamma\otimes_{\eta_R,A,\eta_L}\Gamma$ is the co-multiplication for $(A,\Gamma)$, and $\Gamma\otimes_{\eta_R,A,\eta_L}\Gamma$ is regarded as an $A$-$A$-bimodule through the left most and right most $A$-module structures; then, pair the first factor with the third, and the second with the forth, we have the following composite as the co-multiplication
    \begin{align*}
        \Delta:\Gamma\otimes_{A-A} 
        \Gamma
        & \xrightarrow{\Delta_\Gamma\otimes \Delta_\Gamma}(\Gamma\otimes_{\eta_R,A,\eta_L}\Gamma)\otimes_{A-A}(\Gamma\otimes_{\eta_R,A,\eta_L}\Gamma)\\
        \ & \rightarrow (\Gamma\otimes_{A-A}\Gamma)\otimes_{\eta_R,A,\eta_L}(\Gamma\otimes_{A-A}\Gamma).
    \end{align*}
    \end{itemize}
    The multiplication of $\Gamma$ then induces a morphism of Hopf algebroids $(A,\Gamma\otimes_{A-A}\Gamma)\rightarrow (A,\Gamma)$, which further induces a morphism of the corresponding dual algebras $\Gamma^\vee\rightarrow (\Gamma\otimes_{A-A}\Gamma)^\vee$.
\end{construction}

\begin{rem}\label{rem:symmetric-monoidal-structure-for-dual-3}
    In practice usually we would work with a chosen basis of $\Gamma$ as a free $A,\eta_L$-module. However, for the symmetric monoidal structure of $\RCoMod_\Gamma$ as described in Remark \ref{rem:symmetric-monoidal-structure-for-dual}, the consideration of the $\eta_R$-structure would prevent the usage of the chosen basis of $\Gamma$ as a free $A,\eta_L$-module. 
    
    For the same reason, suppose that $M,N\in \RCoMod_\Gamma\simeq \Mod^d_{\Gamma^\vee}$, and given $D_1,D_2\in \Gamma^\vee$, there is no $D_1\otimes D_2:M\otimes_A N\rightarrow M\otimes_A N$; yet there is nevertheless $D_1\otimes D_2:M\otimes_\Z N\rightarrow M\otimes_A N$ as the following composite  
    \begin{align*}
        M\otimes_\Z N
        & \xrightarrow{\psi_M\otimes\psi_N}(M\otimes_{A,\eta_L}\Gamma)\otimes_\Z (\Gamma\otimes_{\eta_L,A}N) \\
        & \xrightarrow{(\id\otimes D_1)\otimes(D_2\otimes\id)}M\otimes_\Z N\rightarrow M\otimes_A N.
    \end{align*}
    Now, suppose that a basis of $\Gamma$ as a free $A,\eta_L$-module is specified, say, $\{e_i\}_{i\in I}$, and let $\{e_i^\vee\}_{i\in I}$ denote the corresponding dual basis in $\Gamma^\vee$; then $\Hom_{A,\eta_L}(\Gamma\otimes_{\eta_L,A,\eta_L}\Gamma,A)$ admits the (topological) basis $\{e_i^\vee\otimes e_j^\vee\}_{i,j\in I}$. For an element $D\in \Gamma^\vee$, let $d_{ij}$ denote the image of $e_i\otimes e_j$ through the composite 
    $$
    \Gamma\otimes_{\eta_L,A,\eta_L}\Gamma \xrightarrow{\text{multiplication}}\Gamma\xrightarrow{D}A;
    $$
    then for $m\in M,n\in N$, 
    $$
    D(m\otimes n)=\sum_{i,j\in I}d_{i,j}(e_i^\vee(m)\otimes e_j^\vee(n))\in M\otimes_A N, 
    $$
    where the summation at the right hand side is essentially finite, since for all but finitely many $i,j$, we have $e_i^\vee(m)=e_j^\vee(n)=0$; finally note that, although each $e_i^\vee\otimes e_j^\vee$ only gives a morphism $M\otimes_\Z N\rightarrow M\otimes_A N$, the above formula actually determines a morphism $D:M\otimes_A N\rightarrow M\otimes_A N$.
\end{rem}

\begin{rem}\label{rem:left-and-right-adjoints-to-multiplication}
    The multiplication map $A\otimes_\Z A\rightarrow A$ induces the ``forgetful'' functor $\Mod_A\rightarrow \Mod_{A\otimes_\Z A}$. Its left adjoint is $-\otimes_{A\otimes_\Z A}A$, and its right adjoint is $\Hom_{A\otimes_\Z A}(A,-)$. Also note that, for an $A$-$A$-bimodule, or an $A\otimes_\Z A$-module $M$, $\Hom_{A\otimes_\Z A}(A,M)$ is precisely the $A\otimes_\Z A$-submodule of $M$ consists of elements $m\in M$ such that $(a\otimes 1)m=(1\otimes a)m$. 

    Let $M,M'$ be $A$-modules, $N$ be an abelian group. Then the canonical inclusion 
    $$
    \Hom_\Z(M\otimes_A M',N)\hookrightarrow \Hom_\Z(M\otimes_\Z M', N)
    $$
    induces an isomorphism 
    $$
    \Hom_\Z(M\otimes_A M',N)\xrightarrow{\sim} \Hom_{A\otimes_\Z A}(A,\Hom_\Z(M\otimes_\Z M',N)),
    $$
    where $\Hom_\Z(M\otimes_\Z M',N)$ is regarded as an $A\otimes A$-module through the $A\otimes_\Z A$-module structure of $M\otimes_\Z M'$. Indeed, it suffices to apply the adjunction that 
    \begin{align*}
        \Hom_{A\otimes_\Z A}(A,\Hom_\Z(M\otimes_\Z M',N))
        & \simeq \Hom_\Z((M\otimes_\Z M')\otimes_{A\otimes_\Z A}A,N) \\
        & \simeq \Hom_\Z(M\otimes_A M',N).
    \end{align*}
\end{rem}

\begin{rem}\label{rem:profinite-type-tensor-product}
    If there is a good notion of topological tensor product for topological abelian groups like $\Gamma^\vee$, then the symmetric monoidal structure on $\RCoMod_\Gamma\simeq \Mod^d_{\Gamma^\vee}$ could be reflected to a co-multiplication of the form $\Gamma^\vee\rightarrow \Gamma^\vee\widehat{\otimes}_{left,A,left}\Gamma^\vee$, or more accurately, the morphism $\Gamma^\vee\rightarrow \Hom_{A,\eta_L}(\Gamma\otimes_{A-A}\Gamma,A)$ in Construction \ref{rem:co-multiplication-for-profinitely-generated} could be interpreted as a morphism 
    $$
    \Gamma^\vee\rightarrow \Hom_{A\otimes_\Z A}(A,\Gamma^\vee\widehat{\otimes}_{left,A,left}\Gamma^\vee).
    $$
    A candidate for this notion of tensor product is that of the solid tensor of solid modules.
\end{rem}

\section{Profinite Rings and Dual Algebras}\label{section:a-review-of-profinite-rings}

In order to illustrate the possible study of comodule cohomology through the perspective of dual algebras, in this section we review the classical results of profinite rings, especially the corresponding cohomological algebra, for which the main reference is \cite{RZ10}. A comparison of the classical approach to cohomological algebra with the cohomological algebra in solid modules can be found in \cite{Tang24}.

\begin{defn}[{\cite[Proposition 5.1.2]{RZ10}}]\label{defn:profinite-rings}
    A profinite ring is a topological ring $\Lambda$ which satisfies the following equivalent conditions:
    \begin{itemize}
        \item[(1)] $\Lambda$ is isomorphic to a cofiltered limit of finite discrete rings. 

        \item[(2)] $\Lambda$ is compact and Hausdorff.

        \item[(3)] $\Lambda$ is compact, Hausdorff, and totally disconnected. 

        \item[(4)] The zero element of $\Lambda$ has a fundamental neighborhood $\{T_i\}_{i\in I}$ of open ideals, and $A\xrightarrow{\sim}\varprojlim_{i\in I}A/T_i$.

        \item[(5)] There is a cofiltered diagram $\{\Lambda_i,\varphi_{ij}\}$ where each $\varphi_{ij}$ is an epimorphism, so that there is an isomorphism $\Lambda\simeq \varprojlim_{i\in I}\Lambda_i$.    
    \end{itemize}
    In particular, every finite discrete ring is a profinite ring. 
\end{defn}

\begin{rem}\label{rem:topological-modules}
    Let $\Lambda$ be a profinite ring. By a topological $\Lambda$-module it means a Hausdorff topological abelian group $M$ equipped with a continuous $\Lambda$-action $\Lambda\times M\rightarrow M$ which is bi-additive and associate. 

    Let $M,N$ be two topological $\Lambda$-modules. The group of continuous $\Lambda$-homomorphisms $\Hom_{\Lambda}(M,N)$ can be equipped with the compact-open topology, and is therefore a topological abelian group. 
\end{rem}

Let $\Lambda$ be a profinite ring, and let $\Hom_\Lambda(-,-)$ denote the group of continuous $\Lambda$-homomorphisms between topological $\Lambda$-modules. In order to develop a theory of cohomological algebra, we specify the following two types of topological $\Lambda$-modules.

\begin{defn}\label{defn:discrete-modules}
    A discrete $\Lambda$-module is a topological $\Lambda$-module $N$ whose underlying topological space is discrete, that is, $N$ has the following property (\cite[Lemma 5.1.1]{RZ10}):
    \begin{itemize}
        \item $N$ is the union of its finite $\Lambda$-submodules.
    \end{itemize}
    Let $\Mod^d_\Lambda$ denote the category of discrete $\Lambda$-modules. It is routine to check that $\Mod^d_\Lambda$ is an abelian category.
\end{defn}

\begin{defn}\label{defn:profinite-modules}
    A profinite $\Lambda$-module is a topological $\Lambda$-module $M$ whose underlying topological abelian group is profinite, that is, $M$ has the following property (\cite[Lemma 5.1.1]{RZ10}):
    \begin{itemize}
        \item $M$ is the inverse limit of its finite quotient $\Lambda$-modules; equivalently, the submodules of $M$ of finite index form a fundamental system of neighborhoods of $0$.
    \end{itemize}
    Let $\Mod^{pf}_\Lambda$ denote the category of profinite $\Lambda$-modules. It is routine to check that $\Mod^{pf}_\Lambda$ is an abelian category. 
\end{defn}

\begin{rem}\label{rem:profinite-tensor}
    Let $\Mod^{pf}_{\Lambda^{\op}}$ denote the category of right profinite $\Lambda$-modules. There is the profinite tensor product
    $$
    \widehat{\otimes}_\Lambda:\Mod^{pf}_{\Lambda^{\op}}\times \Mod^{pf}_{\Lambda}\rightarrow \PfAb,
    $$
    where $\PfAb$ is the category of profinite abelian groups. This profinite tensor product can be characterized by the standard universal property, or more concretely, given a right profinite $\Lambda$-module $M$ which is expressed as a cofiltered limit of finite right $\Lambda$-modules $M\simeq \varprojlim_{i\in I}M_i$, and a left profinite $\Lambda$-module $N$ which is expressed as a cofiltered limit of finite left $\Lambda$-modules $N\simeq \varprojlim_{j\in J}N_j$, then 
    $$
    M\widehat{\otimes}_\Lambda N\simeq \varprojlim_{i\in I,j\in J}M_i\otimes_{\Lambda}N_j.
    $$

\end{rem}


\begin{rem}\label{rem:profinite-to-discrete}
    Let $M$ be a profinite $\Lambda$-module, which is written as a cofiltered limit of its finite quotient $\Lambda$-modules: $M\simeq \varprojlim_{i\in I}M_i$; and let $N$ be a discrete $\Lambda$-module. Then the canonical morphism 
    $$
    \varinjlim_{i\in I^{\op}}\Hom_{\Lambda}(M_i,N)\rightarrow \Hom_\Lambda(\varprojlim_{i\in I}M_i,N)
    $$
    is an isomorphism (\cite[Lemma 5.1.4]{RZ10}). In particular, the compact-open topology for $\Hom_\Lambda(M,N)$ is discrete. 
\end{rem}


\begin{prop}[Pontryagin Duality]\label{prop:Pontryjagin-dual}
    The assignment $M\mapsto \Hom_\Z(M,\Q/\Z)$ induces an equivalence of categories $(\Mod^{pf}_\Lambda)^\op\xrightarrow{\sim}\Mod^d_{\Lambda^{\op}}$. 
\end{prop}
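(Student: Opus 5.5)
We have to prove Pontryagin duality for a profinite ring $\Lambda$: the functor $M\mapsto M^*:=\Hom_\Z(M,\Q/\Z)$, with $\Hom$ meaning continuous homomorphisms, gives an equivalence $(\Mod^{pf}_\Lambda)^\op\simeq\Mod^d_{\Lambda^\op}$. Our plan is to reduce to the finite case using the structural descriptions in Definitions \ref{defn:discrete-modules} and \ref{defn:profinite-modules}, and to construct the quasi-inverse $N\mapsto N^*:=\Hom_\Z(N,\Q/\Z)$ for $N$ discrete.

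\textbf{Finite modules.} For a finite abelian group $F$, the dual $F^*=\Hom_\Z(F,\Q/\Z)$ is noncanonically isomorphic to $F$. The evaluation map $F\to F^{**}$ is an isomorphism, which we check by decomposing $F$ into cyclic groups and using $\Hom(\Z/n,\Q/\Z)\simeq\Z/n$. If $F$ is a left $\Lambda$-module on which $\Lambda$ acts through a finite quotient $\Lambda/T$, then $F^*$ becomes a right module via $(f\lambda)(x)=f(\lambda x)$. Hence we get an exact anti-equivalence between finite left $\Lambda$-modules with open annihilator and finite right $\Lambda$-modules with open annihilator. Exactness follows because $\Q/\Z$ is injective.

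\textbf{Profinite to discrete.} Let $M$ be profinite and write $M\simeq\varprojlim M_i$ over its finite quotient modules. By the argument of Remark \ref{rem:profinite-to-discrete}, applied to the discrete group $\Q/\Z$, every continuous character factors through some $M_i$. Therefore $M^*\simeq\varinjlim M_i^*$ is a union of finite right submodules. Continuity of the action $\Lambda\times M\to M$ implies that each $M_i$ is killed by an open ideal, so $M^*$ is a discrete $\Lambda^\op$-module with continuous action.

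\textbf{Discrete to profinite.} Let $N$ be discrete and write $N=\bigcup N_j$ over its finite submodules. Then $N^*\simeq\varprojlim N_j^*$, and the compact-open topology, which here is the topology of pointwise convergence, agrees with the inverse limit topology. So $N^*$ is a profinite left module. Continuity of the action is checked on each finite stage $N_j^*$, where $\Lambda$ acts through an open-ideal quotient because $N_j$ is finite and the action on $N$ is continuous.

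\textbf{Evaluation maps and functoriality.} The evaluation maps $M\to M^{**}$ and $N\to N^{**}$ are limits, respectively colimits, of the finite isomorphisms. Explicitly, $M^{**}\simeq\varprojlim_i(\varinjlim_k M_k^*)_i^*$, which after reindexing over the finite quotients of $M^*$ recovers $\varprojlim M_i^{**}\simeq M$. Both maps are natural and $\Lambda$-linear. Functoriality on morphisms is clear: continuous maps dualize to continuous maps.

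\textbf{Main obstacle.} The delicate step is the topological bookkeeping. We must show that the finite quotients of $M^*$, or dually the finite submodules, correspond exactly to the $M_i$, so that the limits and colimits match. We also need that $M\to M^{**}$ is a homeomorphism and not merely a bijection. For the latter, a continuous bijection from a compact space to a Hausdorff space is a homeomorphism, so it suffices to show $M^{**}$ is Hausdorff. That holds because $M^{**}$ is profinite by the discrete-to-profinite step.
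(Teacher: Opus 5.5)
Your argument is correct, but it is more hands-on than the paper's. The paper's proof is a one-line reduction. It invokes Pontryagin duality between profinite abelian groups and discrete torsion abelian groups as a black box (\cite[Theorem 2.9.6]{RZ10}). It then cites \cite[\S 5.1]{RZ10} for the fact that a continuous $\Lambda$-action on one side transports to a continuous $\Lambda^{\op}$-action on the other.

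You instead rebuild the group-level duality from finite abelian groups. You index your limits and colimits directly by the finite quotient modules $M_i$ of $M$ and the finite submodules $N_j$ of $N$. This way the module structures and the continuity of the actions (via open annihilators) are handled stage by stage rather than transported at the end. The paper's route is shorter and rests on a standard theorem. Yours is self-contained and isolates the two facts that make the module version work:
\begin{itemize}
\item Open submodules are cofinal among open subgroups of a profinite module (the fundamental-system clause of Definition \ref{defn:profinite-modules}). This is what lets an arbitrary character $M\to\Q/\Z$ factor through some $M_i$. Note that Remark \ref{rem:profinite-to-discrete} as stated concerns only $\Lambda$-linear maps into discrete $\Lambda$-modules, and $\Q/\Z$ is not a $\Lambda$-module, so this cofinality is the point you actually need.
\item The finite submodules of a discrete module form a directed family, so every finite subset of $N$ lies in some $N_j$. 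This is what identifies the compact-open topology on $N^*$ with the inverse-limit topology.
\end{itemize}

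Two small slips should be fixed. First, in the evaluation step and in your last paragraph, the objects to match with the $M_i$ are the finite \emph{submodules} of $M^*$ (cofinally the images of the $M_i^*$), not its finite quotients. Second, you should say explicitly that the compact-open topology on $M^*$ is discrete. This holds because $\{0\}=\{\chi:\chi(M)\subseteq\{0\}\}$ is open, $M$ being compact.
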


\begin{proof}
    This is an application of the Pontryagin duality between discrete torsion groups and profinite groups, see \cite[Theorem 2.9.6]{RZ10} and \cite[$\S$ 5.1]{RZ10}.
\end{proof}

\begin{rem}[{\cite[Propositions 5.4.2, 5.4.4]{RZ10}}]\label{rem:projective-and-injective}
    The category $\Mod^{pf}_\Lambda$ has enough projectives, and a profinite $\Lambda$-module is projective if and only if it is a direct summand of a free profinite $\Lambda$-module.

    Dually, the category $\Mod^d_\Lambda$ has enough injectives, and a discrete $\Lambda$-module is injective if and only if it is a direct factor of a cofree discrete $\Lambda$-module.
\end{rem}

\begin{lem}[{\cite[Exercise 5.4.7]{RZ10}}]\label{lem:projective-and-injective}
    Let $P$ be a projective profinite $\Lambda$-module, then the functor 
    $$
    \Hom_{\Lambda}(P,-):\Mod^d_\Lambda\rightarrow \Ab
    $$
    is exact. 
    
    Similarly, let $I$ be an injective discrete $\Lambda$-module, then the functor 
    $$
    \Hom_\Lambda(-,I):(\Mod^{pf}_\Lambda)^\op\rightarrow \Ab
    $$
    is exact.
\end{lem}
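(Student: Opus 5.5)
I will reduce the statement to the case of free profinite modules and cofree discrete modules, using Remark \ref{rem:projective-and-injective}: every projective profinite $\Lambda$-module $P$ is a direct summand of a free profinite module $\widehat{F}=\Lambda\widehat{\otimes}\widehat{\Z}[[X]]$ on some profinite pointed space $X$. The functor $\Hom_\Lambda(P,-)$ is then a direct summand of $\Hom_\Lambda(\widehat{F},-)$, and a direct summand of an exact functor is exact. So it suffices to treat $P=\widehat{F}$. By the universal property of free profinite modules, continuous $\Lambda$-homomorphisms $\widehat{F}\to N$ into a discrete $N$ are the same as continuous pointed maps $X\to N$. I therefore need to show that $N\mapsto C_*(X,N)$, the group of continuous pointed maps, is exact on $\Mod^d_\Lambda$.

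Left exactness is formal, since $\Hom$ is always left exact. For right exactness, let $N\to N''$ be a surjection of discrete modules and $g:X\to N''$ continuous. Because $X$ is compact and $N''$ is discrete, $g$ is locally constant with finitely many values. So $X$ is partitioned into finitely many clopen sets $X_1,\dots,X_k$ on each of which $g$ is constant, with value $n''_j$, say. Choose preimages $n_j\in N$ of the $n''_j$, taking $n_j=0$ on the piece containing the basepoint, where $g$ vanishes. The map $X\to N$ that equals $n_j$ on $X_j$ is continuous, pointed, and lifts $g$. This gives exactness of $\Hom_\Lambda(P,-)$.

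An alternative route avoids free modules. By Remark \ref{rem:profinite-to-discrete}, $\Hom_\Lambda(P,N)=\varinjlim_i\Hom_\Lambda(P_i,N)$ over the finite quotients $P_i$ of $P$. I would then use that $P$ is projective relative to surjections of finite modules, and that filtered colimits are exact. Lifting through $N\to N''$ reduces to the finitely generated submodules involved. I expect the main obstacle here to be arranging that a map $P\to N''$, which factors through a finite $P_i$ with finite image, lifts to a map into a finite submodule of $N$. The reason is that $P_i$ itself is not projective, so one must pass back to $P$, use projectivity in $\Mod^{pf}_\Lambda$ against the finite, hence profinite, surjection $N_0\to \mathrm{im}$ with $N_0\subset N$ finite, and then observe that the resulting map into the discrete finite module $N_0$ is continuous. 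This is actually direct, which is why I would present this version: pick a finite $\Lambda$-submodule $N_0\subseteq N$ surjecting onto the finite image of $g$, which is possible since $N$ is the union of its finite submodules (Definition \ref{defn:discrete-modules}), and then lift by projectivity of $P$ in $\Mod^{pf}_\Lambda$.

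The injective statement is dual, and I would deduce it from the first via Pontryagin duality (Proposition \ref{prop:Pontryjagin-dual}). Under $M\mapsto M^*=\Hom_\Z(M,\Q/\Z)$, injective discrete left modules correspond to projective profinite right modules, and there is an identification $\Hom_\Lambda(M,I)\cong\Hom_{\Lambda^{\op}}(I^*,M^*)$ for $M$ profinite. Exactness of $\Hom_\Lambda(-,I)$ on $(\Mod^{pf}_\Lambda)^{\op}$ then becomes exactness of $\Hom_{\Lambda^{\op}}(I^*,-)$ on discrete right modules, because duality turns short exact sequences into short exact sequences. That is the first part applied to $\Lambda^{\op}$.
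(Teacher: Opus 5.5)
Your proposal is correct, and the version you say you would present (compactness makes the image of $g$ finite, you choose a finite submodule $N_0\subseteq N$ mapping onto it, and you lift by projectivity of $P$ in $\Mod^{pf}_\Lambda$, then deduce the injective case by Pontryagin duality) is exactly the paper's argument. Your first route, via free profinite modules and clopen partitions, also works. It costs more, since it needs the structure theorem for projectives and the extension of the free-module universal property to discrete targets, both of which the paper's argument avoids.
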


\begin{proof}
    Since the two statements are dual to each other by Pontryagin duality, 
    it suffices to check the first statement, that is, for any epimorphism $r:N\rightarrow N'$ of discrete $\Lambda$-modules, the corresponding morphism $\Hom_\Lambda(P,N)\xrightarrow{r\circ -} \Hom_\Lambda(P,N')$ should be an epimorphism. Given any continuous $\Lambda$-homomorphism $f:P\rightarrow N'$, since $P$ is compact and $N'$ discrete, $f(P)$ is a finite $\Lambda$-submodule of $N'$. Note that, $r^{-1}(f(P))$, being a discrete $\Lambda$-submodule of $N$, is a union of its finite $\Lambda$-submodules, and therefore there is a finite $\Lambda$-submodule $\widetilde{N}\subseteq r^{-1}(f(P))\subseteq N$ such that $r|_{\widetilde{N}}:\widetilde{N}\rightarrow f(P)$ is an epimorphism. Therefore, it reduces to the case where $N,N'$ are finite $\Lambda$-modules, which are certainly profinite $\Lambda$-modules, and the statement follows. 
\end{proof}

\begin{rem}\label{rem:cohomological-algebra-for-profinite}
    Although currently we have not yet set up a suitable category of $\Lambda$-modules encompassing both the profinite and discrete $\Lambda$-modules for cohomological algebra, Remark \ref{rem:projective-and-injective} and Lemma \ref{lem:projective-and-injective} guarantees that $\Ext_{\Lambda}(M,N)$ is well-defined in the following three cases:
    \begin{itemize}
        \item Both $M$ and $N$ are profinite $\Lambda$-modules, and $\Ext_{\Lambda}(M,N)$ can be computed by taking a resolution of $M$ by projective profinite $\Lambda$-modules. Note that in this case, $\Hom_{\Lambda}(M,N)$ might admit a non-discrete topology, but in $\Ext_\Lambda(M,N)$ we do not take this topology into account.

        \item $M$ is a profinite $\Lambda$-module, $N$ is a discrete $\Lambda$-module, and $\Ext_{\Lambda}(M,N)$ can be computed by taking a resolution of $M$ by projective profinite $\Lambda$-modules, and can also be computed by taking a resolution of $N$ by injective discrete $\Lambda$-modules. 

        \item Both $M$ and $N$ are discrete $\Lambda$-modules, and $\Ext_{\Lambda}(M,N)$ can be computed by taking a resolution of $N$ by injective discrete $\Lambda$-modules. Note that in this case, $\Hom_{\Lambda}(M,N)$ might admit a non-discrete topology, but in $\Ext_\Lambda(M,N)$ we do not take this topology into account.
    \end{itemize}
\end{rem}

Now we apply the aforementioned setting to those Hopf algebroids whose dual algebras are profinite rings.

\begin{convention}\label{convention:Hopf-algebroids-profinite-dual}
    In the rest of this section, let $(A,\Gamma)$ be a Hopf algebroid where $A$ is a finite commutative ring and $\Gamma$, through the left unit map $\eta_L:A\rightarrow \Gamma$, is a free $A$-module with countable basis.
\end{convention}

\begin{lem}\label{lem:profinite-dual-algebra}
    Let $(A,\Gamma)$ be a Hopf algebroid as in Convention \ref{convention:Hopf-algebroids-profinite-dual}. Then its dual algebra $\Gamma^\vee$ is a profinite ring.
\end{lem}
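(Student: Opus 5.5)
We need to show $\Gamma^\vee$ is a profinite ring. By Lemma \ref{lem:profinite-dual-topological-algebra} it is already a topological ring. So it suffices, by criterion (2) of Definition \ref{defn:profinite-rings}, to check that its underlying space is compact and Hausdorff. Alternatively, via criterion (4), we could produce a fundamental system of open two-sided ideals. The plan is to do the topological part first, and then address ideals if needed.

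First, choose a countable basis $\{e_i\}_{i\in I}$ of $\Gamma$ as a free $A,\eta_L$-module. By Lemma \ref{lem:dual-of-free-elaborated}, the compact-open topology on $\Gamma^\vee$ coincides with the product topology on $\prod_{i\in I}Ae_i^\vee$, where each factor is discrete. Since $A$ is finite, each factor is a finite discrete space, hence compact Hausdorff. By Tychonoff, the product is compact and Hausdorff, and it is moreover totally disconnected. Combined with Lemma \ref{lem:profinite-dual-topological-algebra}, this gives a compact Hausdorff topological ring, which is profinite by Definition \ref{defn:profinite-rings} (2).

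Since the equivalence (2)$\Leftrightarrow$(4) is quoted from \cite{RZ10}, this is essentially complete. Still, I would make the open ideals explicit, both as a sanity check and for later use. For a finite subset $K\subset\Gamma$, let $T_K\subseteq\Gamma^\vee$ be the set of $D$ with $D(\gamma')=0$ for all $\gamma'$ in the finite set $K'$, defined as follows. First take the $A$-submodule $\Gamma_K$ generated by $K$. Then close it up under the ``left factors'' of $\Delta$: for $\gamma\in K$ with $\Delta(\gamma)=\sum\gamma'_j\otimes\gamma_j$, include the $\gamma_j$ and the elements $\gamma'_j\eta_R(a)$ for $a\in A$. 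This last step is finite because $A$ is finite. Each $T_K$ is an open left $A$-submodule.

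The main obstacle is that $T_K$ need not be a two-sided ideal. The multiplication formula $D_2\circ D_1(\gamma)=D_2\bigl(\sum\gamma'_j\eta_R(D_1(\gamma_j))\bigr)$ shows that $\Gamma^\vee\circ T\subseteq T$ requires the annihilated set to be stable under the ``right cofactor'' operations. Likewise, $T\circ\Gamma^\vee\subseteq T$ requires stability under the ``left cofactor'' operations. Iterating these closures might not terminate in a finite set. If it does not, I would fall back on the abstract route: a compact Hausdorff totally disconnected ring has a basis of open ideals. This follows from the standard argument in \cite[Proposition 5.1.2]{RZ10}: take an open additive subgroup $U$, and note that $\{x:\Lambda x\Lambda\subseteq U\}$ is open by compactness. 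That settles criterion (4) without explicit coaction combinatorics.
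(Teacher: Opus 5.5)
Your argument is correct and is essentially the paper's proof: identify $\Gamma^\vee$ with $\prod_{i\in I}Ae_i^\vee$ via Lemma \ref{lem:dual-of-free-elaborated}, note that a product of finite discrete spaces is compact Hausdorff, and conclude with Lemma \ref{lem:profinite-dual-topological-algebra} and criterion (2) of Definition \ref{defn:profinite-rings}. The extra discussion of explicit open two-sided ideals is unnecessary, because the paper simply relies on the quoted equivalence of the criteria; your fallback via $\{x:\Lambda x\Lambda\subseteq U\}$ is nonetheless the standard correct argument.
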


\begin{proof}
    It suffices to combine Lemma \ref{lem:dual-of-free-elaborated} and Definition \ref{defn:profinite-rings}.
\end{proof}

Recall from Proposition \ref{thm:covariant-translation-for-profinitely-generated} that there is an equivalence of abelian categories $\RCoMod_\Gamma\simeq \Mod^d_{\Gamma^\vee}$, where $\RCoMod_\Gamma$ is the category of right $\Gamma$-comodules.


\begin{defn}\label{defn:cohomology-for-comodules-and-modules}
    Suppose that $N$ is a $\Gamma$-comodule or a discrete $\Gamma^\vee$-module. Let $\RHom_{\Gamma}(N,-)\simeq \RHom_{\Gamma^\vee}(N,-)$ denote the derived functor of 
    $$
    \Hom_{\Gamma}(N,-)\simeq \Hom_{\Gamma^\vee}(N,-):\RCoMod_\Gamma\simeq \Mod^d_{\Gamma^\vee}\rightarrow \Ab,
    $$ and set $\Ext_{\Gamma^\vee}(N,N'):=\RHom_{\Gamma^\vee}(N,-)(N')$. Also, let $\RHom_{\Gamma^\vee}(-,N)$ denote the derived functor of 
    $$
    \Hom_{\Gamma^\vee}(-,N):(\Mod_{\Gamma^\vee}^{pf})^\op\rightarrow \Ab,
    $$
    and set $\Ext_{\Gamma^\vee}(M,N):=\RHom_{\Gamma^\vee}(-,N)(M)$.

    Similarly, suppose that $M$ is a profinite $\Gamma^\vee$-module, let $\RHom_{\Gamma^\vee}(-,M)$ denote the derived functor of 
    $$
    \Hom_{\Gamma^\vee}(-,M):(\Mod^{pf}_{\Gamma^\vee})^\op\rightarrow \Ab,
    $$ 
    and set $\Ext_{\Gamma^\vee}(M',M):=\RHom_{\Gamma^\vee}(-,M)(M')$. Also, let $\RHom_{\Gamma^\vee}(M,-)$ denote the derived functor of 
    $$
    \Hom_{\Gamma^\vee}(M,-):\Mod^d_{\Gamma^\vee}\rightarrow \Ab,
    $$
    and set $\Ext_{\Gamma^\vee}(M,N):=\RHom_{\Gamma^\vee}(M,-)(N)$.

    Note that if $M$ is a profinite $\Gamma^\vee$-module and $N$ is a discrete $\Gamma^\vee$-module, the notation $\Ext_{\Gamma^\vee}(M,N)$ is potentially ambiguous; however, by Remark \ref{rem:cohomological-algebra-for-profinite}, $\RHom_{\Gamma^\vee}(-,N)(M)\simeq \RHom_{\Gamma^\vee}(M,-)(N)$ and there is no essential ambiguity. 
\end{defn}

\begin{prop}\label{prop:Pontryagin-duality-for-profinite-dual}
    The Pontryagin duality (Proposition \ref{prop:Pontryjagin-dual}) induces an equivalence of categories $(\Mod^{pf}_{\Gamma^\vee})^\op\xrightarrow{\sim}\Mod^d_{\Gamma^{\vee}_\op}$. 
\end{prop}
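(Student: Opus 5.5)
The claim is a direct specialization of Proposition \ref{prop:Pontryjagin-dual}, applied to the profinite ring $\Lambda=\Gamma^\vee$. The plan is therefore first to check that the hypotheses are satisfied, and then to read off the resulting equivalence in the present notation.

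First, by Lemma \ref{lem:profinite-dual-algebra}, under Convention \ref{convention:Hopf-algebroids-profinite-dual} the dual algebra $\Gamma^\vee$ is a profinite ring. By Lemma \ref{lem:dual-of-free-elaborated}, its topology is the product topology on $\prod_{i\in I}Ae_i^\vee$ with $A$ finite and discrete. Moreover, by Lemma \ref{lem:profinite-dual-topological-algebra}, the ring operations are continuous for this topology. Hence $\Gamma^\vee$ is a legitimate input for Proposition \ref{prop:Pontryjagin-dual}.

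Next, applying Proposition \ref{prop:Pontryjagin-dual} with $\Lambda=\Gamma^\vee$ gives an equivalence $(\Mod^{pf}_{\Gamma^\vee})^\op\xrightarrow{\sim}\Mod^d_{(\Gamma^\vee)^\op}$, given by $M\mapsto\Hom_\Z(M,\Q/\Z)$. Here continuous homomorphisms are understood, and the right action is $(\chi\cdot D)(m)=\chi(Dm)$. By Definition \ref{defn:modules-over-profinite-dual}, $\Mod^d_{\Gamma^\vee_\op}$ is exactly the category of right discrete $\Gamma^\vee$-modules with continuous action, that is, discrete modules over the opposite ring. The two categories therefore agree.

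The one point needing care is the definition of discrete module. Definition \ref{defn:modules-over-profinite-dual} asks for continuity of $\Gamma^\vee\to\Hom_\Z(M,M)$ into the compact-open topology, whereas Proposition \ref{prop:Pontryjagin-dual} uses continuity of the action map $\Lambda\times M\to M$ (Definition \ref{defn:discrete-modules}). For discrete $M$ these are equivalent. In both cases the condition says that for each $m\in M$ the annihilator-type set $\{D: Dm=m'\}$ is open, i.e. that the stabilizer $\{D:Dm=0\}$ is an open left ideal. Indeed, the compact-open topology on $\Hom_\Z(M,M)$ for discrete $M$ is the topology of pointwise convergence. Under this condition, joint continuity of the action follows from openness of these preimages together with continuity of addition.

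With this identification, the statement follows immediately. I expect no real obstacle beyond this bookkeeping about the two topological conventions for discrete modules.
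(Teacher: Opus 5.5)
Your proposal is correct and matches the paper's proof, which simply specializes Proposition \ref{prop:Pontryjagin-dual} to $\Lambda=\Gamma^\vee$, a profinite ring by Lemma \ref{lem:profinite-dual-algebra}. You also check that, for discrete $M$, continuity of $\Gamma^\vee\to\Hom_\Z(M,M)$ in the compact-open (pointwise) topology is equivalent to joint continuity of $\Gamma^\vee\times M\to M$; the paper leaves this implicit, and your argument is sound, since both conditions reduce to openness of the left ideals $\{D: Dm=0\}$.
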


\begin{proof}
    This is immediate by Proposition \ref{prop:Pontryjagin-dual}.
\end{proof}




\section{Further Development}\label{section:further-development}

In this final section we include some ideas which has not yet been developed in this article but are necessarily demanded by both theory and applications. Still, let $(A,\Gamma)$ be a Hopf algebroid as in Convention \ref{convention:the-dual-Hopf-algebroids}.

\begin{rem}\label{rem:graded-and-filtered}
    The Hopf algebroids in consideration might be graded or filtered. In that case, there are potentially two approaches to dual algebras: either we consider the graded or filtered $\Hom$ and obtain dual algebras as (topological) graded or filtered algebras; or, recall that, graded abelian groups are comodules over the Hopf algebroid $(\Z,\Z[t,t^{-1}])$, and filtered abelian groups are comodules over the Hopf algebroid $(\Z[\tau],\Z[\tau][t,t^{-1}])$ (see for example \cite{Moulinos21}), thus a graded Hopf algebroid might be treated as an ordinary Hopf algebroid of the form $(A,\Gamma[t,t^{-1}])$, and a filtered Hopf algebroid might be treated as an Hopf algebroid $(A,\Gamma[t,t^{-1}])$ which is further equipped with an endomorphism $\tau$.  
\end{rem}

\begin{rem}\label{rem:the-dualizing-object}
  As it constantly reappears throughout this article, a complete treatment of the dual algebra $\Gamma^\vee$ should be carried out in the context of solid abelian groups. 
  In particular, we should have the dualizing object $\omega:=\RHom_{\Gamma^\vee}(A,\Gamma^\vee)$ as an object in the derived category of right $\Gamma^\vee$-modules. 
\end{rem}

\begin{ex}\label{ex:the-prismatic-dualizing-object}
    Resume Example \ref{ex:the-prismatic-dual}, and also recall from Example \ref{ex:prismatic-cohomology-through-dual} that, there should be following resolution of left $\Gamma^\vee$-modules:
    $$
    0\rightarrow \Gamma^\vee\xrightarrow{-\cdot\nabla_q}\Gamma^\vee\xrightarrow{\eta_L}A\rightarrow 0,
    $$
    suggesting that $\RHom_{\Gamma^\vee}(A,\Gamma^\vee)$ is computed by the complex 
    $$
    \Gamma^\vee\xrightarrow{\nabla_q\cdot-}\Gamma^\vee,
    $$
    which is quasi-isomorphic to $(\Gamma^\vee/\nabla_q\Gamma^\vee)[-1]$. Note that through $A\xrightarrow{\epsilon^\vee}\Gamma^\vee\rightarrow \Gamma^\vee/\nabla_q\Gamma^\vee$, the underlying $A$-module of $\Gamma^\vee/\nabla_q\Gamma^\vee$ is isomorphic to $A$; but under this identification, for $\nabla_q\in \Gamma^\vee$ and $f\in A=\Z_p[[q-1]]$, we have 
    $$
    f\cdot \nabla_q=-\frac{f(q)-f(q^{\frac{1}{1+p}})}{q(q-1)}.
    $$
    Note also that, regarding $A$ as a right $\Gamma^\vee$-module in this way, then by definition there should be following resolution of right $\Gamma^\vee$-modules:
    $$
    0\rightarrow \Gamma^\vee\xrightarrow{\nabla_q\cdot -}\Gamma^\vee\rightarrow A\rightarrow 0,
    $$
    suggesting that $\RHom_{\Gamma^\vee_\op}(A,\Gamma^\vee_\op)$ is computed by the complex
    $$
    \Gamma^\vee\xrightarrow{-\cdot \nabla_q}\Gamma^\vee,
    $$
    which is quasi-isomorphic to $A[-1]$, where $A$ is naturally equipped with its canonical left $\Gamma^\vee$-module structure.
\end{ex}


\begin{rem}\label{rem:the-dualizing-object-comodule-version}
    There might be another possible computation of the dualizing object for $(A,\Gamma)$: regard $A$ as a left $\Gamma$-comodule through $\eta_L:A\rightarrow \Gamma\simeq \Gamma\otimes_{\eta_R,A}A$, and $\Gamma$ as a left $\Gamma$-comodule through the co-multiplication $\Gamma\xrightarrow{\Delta}\Gamma\otimes_{\eta_R,A,\eta_L}\Gamma$; then there is the object $\RHom_{\LCoMod_\Gamma}(\Gamma,A)$, which is naturally a right module over the algebra $\Hom_{\LCoMod_\Gamma}(\Gamma,\Gamma)\simeq \Hom_{A,\eta_L}(\Gamma,A)$. Note that, potentially the object $\RHom_{\LCoMod_\Gamma}(\Gamma,A)$ should also be ``equipped with a topology'', although which might turn out to be discrete.
\end{rem}

\begin{ex}\label{ex:the-prismatic-dualizing-object-comodule-version}
    Resume Example \ref{ex:the-prismatic-dual}, that is, consider the Hopf algebroid $(A,\Gamma)=(\Z_p[[q-1]],\Z_p[[q_0-1,q_1-1]]\{\frac{q_1-q_0}{[p]_{q_0}}\}_\delta)$, together with the ring homomorphism 
    $$
    m_q:\Z_p[[q_0-1,q_1-1]]\{\frac{q_1-q_0}{[p]_{q_0}}\}_\delta\rightarrow \Z_p[[q-1]],\quad\quad q_0\mapsto q,q_1\mapsto q^{1+p},
    $$
    and the operator $\nabla_q=\frac{1}{q(q-1)}\circ(m_q-1)\in \Gamma^\vee=\Hom_{A,\eta_L}(\Gamma,A)$. If $\Gamma$ is regarded as a right $\Gamma$-comodule through $\Delta:\Gamma\rightarrow \Gamma\otimes_{\eta_R,A,\eta_L}\Gamma$, then $\nabla_q$ acts on $\Gamma$ through the composite (and we abusively denote this composite still by $\nabla_q$)
    $$
    \nabla_q:\Gamma\xrightarrow{\Delta}\Gamma\otimes_{\eta_R,A,\eta_L}\Gamma\xrightarrow{\id_\Gamma\otimes \nabla_q}\Gamma,
    $$
    which is not a morphism of right $\Gamma$-comodule (not even a morphism of $A,\eta_R$-modules), yet is a morphism of left $\Gamma$-comodules; moreover, there is the following ($(p,[p]_q)$-completely) exact sequence 
    $$
    0\rightarrow A\xrightarrow{\eta_L}\Gamma\xrightarrow{\nabla_q}\Gamma\rightarrow 0,
    $$
    suggesting that $\RHom_{\LCoMod_\Gamma}(\Gamma,A)$ is computed by the complex 
    $$
    (\Hom_{\LCoMod_\Gamma}(\Gamma,\Gamma)\xrightarrow{\nabla_q\circ-}\Hom_{\LCoMod_\Gamma}(\Gamma,\Gamma))\simeq (\Gamma^\vee\xrightarrow{\nabla_q\circ -}\Gamma^\vee).
    $$
\end{ex}

\begin{rem}\label{rem:dependence-on-the-presentation}
    The object $\RHom_{\LCoMod_\Gamma}(\Gamma,A)$, or $\RHom_{\Gamma^\vee}(A,\Gamma^\vee)$, by definition, depends on the Hopf algebroid $(A,\Gamma)$, or the presentation $\Spec(A)\rightarrow \mathfrak{X}$ where $\mathfrak{X}$ is the stack associated to $(A,\Gamma)$. However, the object $\RHom_{\LCoMod_\Gamma}(\Gamma,A)$ or $\RHom_{\Gamma^\vee}(A,\Gamma^\vee)$ might sometimes be identified as canonical (at least for $\Spec(A)$): for example, consider the Hopf algebroid $(\C[x],\C[x,y]^\wedge_{(x-y)})=(\C[x],\C[x][[dx]])$; note that in this case we can not immediately apply Theorem \ref{thm:covariant-translation-for-profinitely-generated} since for $(\C[x],\C[x][[dx]])$ we should consider co-continuous comodules; yet following similar spirit we have the dual algebra $\C[x][\frac{d}{dx}]$, with the relations that $\frac{d}{dx}\circ f=f\circ \frac{d}{dx}+\frac{d}{dx}(f)$, and in this case we have 
    $$
    \RHom_{\C[x][\frac{d}{dx}]}(\C[x],\C[x][\frac{d}{dx}])\simeq \C[x][\frac{d}{dx}]/(\frac{d}{dx}\circ \C[x][\frac{d}{dx}])[-1],
    $$
    and there is an isomorphism of right $\C[x][\frac{d}{dx}]$-modules 
    $$
    C[x][\frac{d}{dx}]/(\frac{d}{dx}\circ \C[x][\frac{d}{dx}])\simeq \Omega^1_{\C[x]/\C},
    $$
    where $f\frac{d}{dx}$ acts on $\Omega^{1}_{\C[x]/\C}$ through $-\mathcal{L}_{f\frac{d}{dx}}$.

    Still, it is expected that, the object $\RHom_{\Gamma^\vee}(A,\Gamma^\vee)\otimes^L_{\Gamma^\vee}A\in D(\Z)$ depends only on the stack $\mathfrak{X}$ associated to $(A,\Gamma)$.
\end{rem}

\end{sloppy}
\end{document}